%% file: Main.tex
\documentclass[reqno]{amsart}
\pdfoutput=1 
\usepackage{preprintStyle}

\usepackage{bm}
\usepackage{comment}

\usepackage{algorithmic}

\input{def}	

\usepackage{graphicx}
\usepackage{color}
\usepackage{circuitikz}
\usepackage{tikz}
\usetikzlibrary{calc,positioning,shapes}
\usetikzlibrary{patterns,decorations.pathmorphing,decorations.markings}
\usetikzlibrary{external}
\pgfplotsset{compat=newest}
\usepackage[margin=10pt,font=small,labelfont=bf,labelsep=endash]{caption}
\usepackage{subcaption}

\usetikzlibrary{intersections, backgrounds}
\usetikzlibrary{circuits}
\usetikzlibrary{circuits.ee.IEC}
\usepackage{pgfplots}
\usepackage{siunitx}
\pgfplotsset{compat=newest}

\definecolor{color0}{rgb}{0.12156862745098,0.466666666666667,0.705882352941177}
\definecolor{color1}{rgb}{1,0.498039215686275,0.0549019607843137}
\definecolor{color2}{rgb}{0.172549019607843,0.627450980392157,0.172549019607843}
\definecolor{color3}{rgb}{0.83921568627451,0.152941176470588,0.156862745098039}
\definecolor{color4}{rgb}{0.580392156862745,0.403921568627451,0.741176470588235}
\definecolor{color5}{rgb}{0,0,0}

\definecolor{mycolor1}{rgb}{0.00000,0.44700,0.74100}
\definecolor{mycolor2}{rgb}{0.85000,0.32500,0.09800}
\definecolor{mycolor3}{rgb}{0.92900,0.69400,0.12500}
\definecolor{mycolor4}{rgb}{0.46600,0.67400,0.18800}
\definecolor{mycolor5}{rgb}{0.49400,0.18400,0.55600}

\newcommand{\lineWidth}{1.2pt}
\newcommand{\imageWidth}{2.0in}
\newcommand{\imageHeight}{1.8in}

\title[\CIM and \MOR for parametric linear control systems]{Contour integral methods and model order reduction for parametric linear control systems}
\author{Serkan Gugercin${}^\dag$}
\author{Mattia Manucci${}^\star$}

\address{${}^{\dag}$ Department of Mathematics and Division of Computational Modeling and Data Analytics, Academy of Data
Science, Virginia Tech, Blacksburg, VA, USA}
\email{gugercin@vt.edu}
\address{${}^{\star}$ Institute for Applied and Numerical Mathematics, Karlsruhe Institute of Technology, 76131 Karlsruhe, Germany}
\email{mattia.manucci@kit.edu}

\date{\today}

\begin{document}

\begin{abstract}
This paper introduces a contour integral method (CIM) for efficiently computing outputs of parametric linear systems in control form over specified time intervals and to a user-prescribed accuracy. The CIM approximates the inverse Laplace transform via a quadrature rule applied along a modified integration contour. For parametric systems, we show how CIM integrates effectively with projection-based model order reduction (MOR) where a greedy algorithm builds the projection spaces following an error estimate we derive for this setting. We additionally demonstrate that the developed projection framework naturally enforces Hermite interpolation conditions. This combination substantially lowers the cost of evaluating the input-output relations across the parameter domain, for a wide range of input functions, and for initial conditions well captured by a low-dimensional subspace.. We demonstrate the accuracy and efficiency of the approach on benchmark non-parametric and parametric control systems, comparing against state-of-the-art projection-based MOR methods.
\end{abstract}

\maketitle
{\footnotesize \textsc{Keywords:} parametric linear control systems, contour integral methods, projection-based model order reduction, error estimates, limited time interval} 

{\footnotesize \textsc{AMS subject classification:} 37M05, 65E10, 65F60, 65G20, 65L70, 93A15 }



\section{Introduction}
Dynamical systems serve as a fundamental structure for modeling and controlling a wide range of complex systems of scientific interest and industrial value. These include domains such as heat transfer, fluid dynamics, chemical reaction flows, biological systems, signal transmission and interference in electronic circuits, wave dynamics, vibration management in large structures, and option pricing. Numerical simulations of related models are among the limited tools available for understanding intricate physical processes. However, the demand for increased accuracy necessitates adding more detail in the modeling phase, leading to more extensive and intricate dynamical systems. The persistent pursuit of optimizing system performance requires the simulation of numerous possible system configurations. In such large-scale contexts, the execution of numerous simulations can impose significant computational demands. This leads to the necessity of developing \emph{reduced order models} (\ROMs), i.e. efficient representations that are quick to evaluate while accurately approximating high-resolution simulations, often referred to as \emph{full order models} (\FOMs). 
This need has driven \emph{model order reduction} (\MOR) that comprises the wide range of mathematical techniques developed to construct such simplified models.

In this paper, we focus on the broad class of parametric linear dynamical systems in control form (also known as the state-space form), i.e., systems of the form  
\begin{equation}
	\label{eqn:LTI:cf:par}
     \left\{\quad \begin{aligned}
		\fE(\prmtr)\dot{\stx}(t,\prmtr)\;  &= \;\fA(\prmtr) \stx(t,\prmtr) +\fB(\prmtr)\inp(t), & \stx(0) &= \stx_0, \\
		\out(t,\prmtr) \;&= \;\fC(\prmtr)\stx(t,\prmtr)+\fD(\prmtr)\inp(t),\\
	\end{aligned}\right.
\end{equation}
where    $\prmtr \in \prmtrSet \subseteq \R^{\prmtrDim}$ is the so-called parameter vector, $\stx(t,\prmtr)\in\R^{\stateDim}$, $\inp(t)\in\R^{\inpDim}$, and $\out(t,\prmtr)\in\R^{\outDim}$ are, respectively, the state, input, and output vectors while $\fE(\prmtr)\in\R^{\stateDim\times\stateDim}$ is invertible for all $\prmtr\in\prmtrSet$, $\fA(\prmtr)\in\R^{\stateDim\times\stateDim}$, $\fB(\prmtr)\in\R^{\stateDim\times\inpDim}$, $\fC(\prmtr)\in\R^{\outDim\times\stateDim}$, and $\fD(\prmtr)\in\R^{\outDim\times\inpDim}$ for all $\prmtr\in\prmtrSet$. The parameters
may enter the models in many ways, representing, for example, material properties,
system geometry, system configuration, initial conditions, and boundary conditions. The goal is to rapidly and accurately characterize the response (the output) of the system for different values of the parameters, of the input function, and the initial solution for all times in a prescribed interval of interest $[T,\Lambda T]$, $T\in\R_{>0}$ and $\Lambda\ge1$. To accomplish this goal, one aims to replace \eqref{eqn:LTI:cf:par} with a suitable \ROM of the form
\begin{equation}
	\label{eqn:LTI:cf:par:red}
     \left\{\quad \begin{aligned}
		\redfE(\prmtr)\dot{\stx}_{\mathrm{r}}(t,\prmtr)\;  &= \;\redfA(\prmtr) \redstx(t,\prmtr) +\redfB(\prmtr)\inp(t), & \redstx(0) &= \stx_{\mathrm{r},0}, \\
		\redout(t,\prmtr) \;&= \;\redfC(\prmtr)\redstx(t,\prmtr)+\fD(\prmtr)\inp(t),\\
	\end{aligned}\right.
\end{equation}
where now $\redstx(t,\prmtr)\in\R^{\stateDimRed}$ is the reduced state with $\stateDimRed\ll\stateDim$, $\redout(t,\prmtr)$ is the approximate output, and $\redfE(\prmtr)\in\R^{\stateDimRed\times\stateDimRed}$, $\redfA(\prmtr)\in\R^{\stateDimRed\times\stateDimRed}$, $\redfB(\prmtr)\in\R^{\stateDimRed\times\inpDim}$, $\redfC(\prmtr)\in\R^{\outDim\times\stateDimRed}$ are the reduced parametric matrices, such that $\redout(t,\prmtr)$ approximates, under prescribed precision $\tol$ and for a correspondingly defined error measure, $\out(t,\prmtr)$ for a wide range of input $\inp$, for all $\prmtr \in \prmtrSet $, and for all $t\in[T,\Lambda T]$.

\subsection{Literature review}  Among reduced-order modeling techniques, projection-based methods stand out as a prominent class. These can be organized into two main frameworks: the \emph{reduced basis method} (\RBM), originally developed to address the computational reduction of parametric \PDEs, and \emph{system-theoretic} methods, which target the reduction of dynamical systems by preserving their input-output behavior.

In both cases, the strategy follows an offline/online decomposition. The \emph{offline} phase consists of expensive computations whose cost scales with the size of the \FOM. The goal is to build one or more reduced subspaces onto which the problem is subsequently projected. These spaces are usually of a dimension much smaller than
the space associated with the \FOM. After the reduced spaces have been generated, the quantity of interest corresponding to any new parameter instance $\prmtr$, input function $\inp$, and/or initial condition $\stx_0$ can be efficiently evaluated during the \emph{online} phase. This is accomplished by solving a \ROM obtained through a Galerkin or Petrov--Galerkin projection of the \FOM onto the reduced spaces. Regarding the \RB method, we refer the reader to, e.g., \cite{HesRS16,Haa17} and the references therein. For the specific case of parametric time-dependent problems, treated from a reduced-basis perspective, we mention the survey \cite{GlaMU17}, where two methodologies are described and compared. The first one is based on a time-stepping solver
like a Runge-Kutta method in the offline phase. The reduced basis is then usually
formed via a greedy algorithm, that is, an iterative procedure where at each iteration one new basis function is added so as to improve the overall approximation capability of the basis set. The greedy strategy is then combined with
compression of the numerical solution computed along the grid through a \emph{proper
	orthogonal decomposition} (\POD), which is a singular value decomposition of the array
containing the solution vectors \cite{DroHO12,Haa13}. The drawback of these time-stepping schemes
is that, in order to approximate the solution at a certain time $T = t_n$, one needs to
compute an approximation of the solution, for both the full and the reduced problem,
at grid points $0 < t_1 < t_2 < \cdot \cdot \cdot  < t_n$, which would be particularly demanding if $T$ is
large and/or there are many grid points. Moreover, even if the \FOM has some stability properties (for instance, asymptotic stability), it is not guaranteed, in general, that the resulting \ROM retains these features; see \cite{Emb19}. The second approach consists of treating time as an additional variable, which
results in a problem of dimension $d + 1$, where $d$ denotes the spatial dimension. The reduced basis is formed by a standard greedy algorithm and then the full problem is
projected (in the sense of Galerkin or Petrov--Galerkin) onto the reduced space \cite{UrbP12}. It
is well known that the size of the discrete problem grows exponentially with respect
to the number of variables when keeping the same accuracy (the so-called curse
of dimensionality); therefore, the computational cost for this approach can become
prohibitive in the offline phase. 

A survey of \MOR\ techniques for parametric dynamical systems of the form \eqref{eqn:LTI:cf:par} is given in \cite{BenGW15}, which draws together contributions from different communities to survey the state of the art in parametric model reduction; among these, it offers a particularly thorough treatment of system-theoretic methods. In this context, the review discusses, among other topics, how to construct system-theoretic reduced spaces that account for parameter variations, and for certain special cases, even characterizes optimal choices of such spaces. Recently, advances have been made toward broadening this picture, with several approaches proposing optimal or greedy-type selection strategies to construct reduced spaces that account for parameter dependence more generally, see, e.g., \cite{GosGU21,MliG23,FenCB24,BelCN25,FenAB17,HunMMS22,MliBG24} and the references therein. 
Within the system theory community, interpolatory methods based on sampling in the frequency domain have long been among the most widely used approaches for constructing reduced-order models; see \cite{AntBG20} for a comprehensive treatment. Recently, this frequency-domain perspective has also been adopted within the reduced basis community: For example, \cite{GugM23} employs frequency-domain sampling within a contour integral method (\CIM) to approximate the inverse Laplace transform, enabling efficient approximation of the state at a specific time instant for linear parabolic \PDEs. This connection between the frequency-domain sampling and contour integral methods for evaluating outputs at prescribed times is a central starting point for our work here. The system-theoretic community has also addressed the construction of \ROMs that are accurate specifically for $t$ in a given time interval $[0,T]$ or $[T,\Lambda T]$. For example, for a discussion of \emph{time-limited balanced truncation} (\TLBT), we refer to \cite{Kur18} and the references therein; for time-limited $\mathcal{H}_2$-optimal \MOR, see \cite{SinG18,GoyR19}. However, to our knowledge, the extension of these time-limited frameworks to the parametric setting has not yet been explored.

As already mentioned, although \MOR can strongly reduce the size of dynamical systems, their accurate time integration may still require an expensive computational effort for accurate real-time predictions due to small time-step constraints or long time horizons. The efficiency limit imposed by standard time integration is also discussed in \cite{HaaKOSW23} and motivates the authors to replace the time discretization scheme with a neural network. Moreover, being efficient, without compromising accuracy, in the time integration, is particularly relevant in those applications where one would be interested in computing the solution only for a given time or in prescribed time windows. Employing standard time-step integrators results in evaluating the state variable in many intermediate time steps that are not of interest. A prominent example application is finance models where the evolution in time of the option price is of interest only in precise time instances where the option can be sold or bought. This motivated the work \cite{GugM23} where a certified projection-based \MOR based on the approximation of the inverse Laplace transform is proposed to deal with the parametric source problem arising from the linear parabolic type \PDEs.

\subsection{Main contributions} This paper introduces an innovative framework designed for the efficient evaluation of outputs from parametric \LTI control systems, with a focus on error certification, at a specific time $T$ or within a time window $[T,\Lambda T]$, where $\Lambda>1$. This goal is achieved by using \emph{contour integral methods} (\CIMs) for time integration. These methods facilitate a direct parallelization of the primary computational tasks \cite{Guglielmi2020,Guglielmi2021} and are inherently well suited for deployment with a non-linear \MOR framework. Initially, we address the non-parametric scenario by introducing the contour integral methods for linear control systems (\CIM-\LCS) approach. We demonstrate that, given fixed initial conditions that belong to a subspace of relativly small dimension, the computational cost of assessing the input-output mapping is solely based on the input, output, and space of the initial solution dimensionalities. This method inherently accommodates varying non-zero initial conditions in a subsapce. Subsequently, we incorporate the parameter dependency and effectively integrate the approach for the non-parametric scenario with a validated projection-based non-linear \MOR method leveraging a greedy algorithm, resulting in the \CIM-\MOR for parametric linear control systems. Our proposed algorithm is based on the development of a new error estimate, introduced in \Cref{lemma:err:est}, which we demonstrate to be effectively assessable. This ensures that the reduction error, as defined appropriately, will remain below a user-specified accuracy threshold. Ultimately, this enables the reduction of the state's dimensionality, facilitating efficient computation of the input-output relationship for each parameter within the parametric domain. This reduction in dimensionality also holds for initial conditions that can be effectively represented within a low-dimensional subspace and a general class of input functions. We also show that, for the proposed projection spaces, certain types of Hermite interpolation conditions hold; see \Cref{lem:int:cond}.

\subsection{Outline of the paper} In \Cref{sec2}, we first review the fundamental tools of \CIM and subsequently explore their extension to the \LCS class to achieve efficient and highly precise approximations. \Cref{sec3} addresses the parametric scenario, where we begin by outlining the general assumptions of our framework and subsequently introduce the \CIM-\MOR algorithm, following the derivation of the error estimate in \Cref{lemma:err:est}. We analyze the computational complexity related to the new error bound in \Cref{sec:3.3} and then present numerical findings for various benchmark cases in \Cref{sec4}. Finally, our conclusions are elaborated in \Cref{sec5}.

\subsection{Notation} $\fI_{\stateDim}$ denotes the identity matrix of size $\stateDim$. For a generic vector $\fv$, we denote by $\|\fv\|$ the standard Euclidean norm of $\fv$. The same holds for matrices, i.e., for a generic matrix $\fA$ we have $\|\fA\|$ being the induced Euclidean norm.

\section{\CIM for \LTI control systems}\label{sec2}
Let us rewrite \eqref{eqn:LTI:cf:par} without parameter dependence, i.e.,
\begin{equation}
	\label{eqn:LTI:cf}
     \left\{\quad \begin{aligned}
		\fE\dot{\stx}(t)\;  &= \;\fA \stx(t) +\fB\inp(t), & \stx(t_0) &= \stx_0, \\
		\out(t) \;&= \;\fC\stx(t)+\fD\inp(t),\\
	\end{aligned}\right.
\end{equation}
We assume that $\fE^{-1}\fA$ is Hurwitz (i.e., all eigenvalues have negative real part) and that the initial condition is an element of a subspace $\calF$ of dimension $\stateDimRed_{\calF}\ll\stateDim$. Therefore, $\stx_0=\inMat\inSolSub$, where $\inMat\in\R^{\stateDim\times\stateDimRed_{\calF}}$ is a matrix with orthonormal columns that span the whole $\calF$ and $\inSolSub\in\R^{\stateDimRed_{\calF}}$.
\begin{remark}
   We emphasize that the assumption that $\fE^{-1}\fA$ is Hurwitz is not required for our methodology to be applicable. In fact, our methodology, which is formulated for finite time windows, is applicable to systems whose associated matrices possess eigenvalues with positive real parts. Nevertheless, working with unstable systems continues to impose constraints on the admissible choice of the time horizon \(T\) for which the numerical approximation remains unperturbed by the exponential growth induced by unstable eigenvalues acting on finite-precision round-off errors.
\end{remark}
\noindent Recall that we are interested in the case where $\stateDim$ is large, say $\stateDim\gg 10^3$, and $\inpDim,\outDim\ll\stateDim$. Our aim is, for different control functions $\inp$ and for all the initial conditions from the subspace $\calF$, to efficiently evaluate the output $\out$ for all $t$ in $[T,\Lambda T]$, $\Lambda\ge1$, without evaluating it for any time $t<T$. It is evident that in pursuit of this objective, the matrix $\fD$, commonly known as the feedthrough matrix, is irrelevant and its influence does not necessitate any approximation. We assume that the Laplace transform of $\inp$ exists and that it allows for a bounded analytic continuation into an appropriate area of the complex plane, excluding the generalized eigenvalues of the matrix pair $(\fA,\fE)$. We subsequently apply the Laplace transform operator $\lapOp$ to \eqref{eqn:LTI:cf}. Denoting the Laplace transforms of $\stx$, $\out$, and $\inp$ by $\hat{\stx}\vcentcolon=\lapOp(\stx)$, $\hat{\out}\vcentcolon=\lapOp(\out)$, and $\hat{\inp}\vcentcolon=\lapOp(\inp)$, respectively, this operation yields
\begin{equation}\label{eqn:LT:cf}
	 \left\{\quad \begin{aligned}
	\hat{\stx}(\lapVar)\;&=\;\left(\lapVar\fE-\fA\right)^{-1}\left(\fE\inMat\inSolSub+\fB\hat{\inp}(\lapVar)\right),\\
	\hat{\out}(\lapVar)\;&=\;\fC\hat{\stx}(\lapVar)+\fD\hat{\inp}(\lapVar).
		\end{aligned}\right.
\end{equation}
To retrieve the solution in terms of the time variable, the inverse Laplace transform is employed, resulting in
\begin{equation}\label{eqn:inv:Lap}
	\out(t)\;=\;\frac{1}{2\pi\imagunit}\int_{\gamma-\imagunit\infty}^{\gamma-\imagunit\infty} \ee^{\lapVar t}\fC\hat{\stx}(\lapVar)\;\domega\;+\;\fD\inp(t),
\end{equation}
for certain $\gamma>0$ where $\imagunit$ denotes the imaginary unit. Then, assuming
\begin{enumerate}[label=\roman*)]
    \item the singularity of the integrand function in \eqref{eqn:inv:Lap} lies in a sectorial region of the complex plane with $\real(\lapVar)< \gamma$ and
    \item the integrand function in \eqref{eqn:inv:Lap} decays as $|\lapVar|\rightarrow\infty$,
\end{enumerate}
following \cite{But57, Tal79}, we can deform the integration profile over the vertical line to the profile $\Gamma$ where the contour $\Gamma$ is an open piecewise smooth curve running from $-\imagunit\infty$ to $+\imagunit\infty$ surrounding all singularities of $\hat{\out}$ in \eqref{eqn:LT:cf}. In other words, we obtain
\begin{equation}\label{eqn:Bro:int}
	\out(t)\;=\;\frac{1}{2\pi\imagunit}\int_{\Gamma} \ee^{\lapVar t}\fC\hat{\stx}(\lapVar)\;\dz+\fD\inp(t).
\end{equation}
Inserting the expression for $\hat{\stx}(\lapVar)$ from~\eqref{eqn:LT:cf} into~\eqref{eqn:Bro:int}, we obtain
\begin{equation}\label{eqn:Bro:int2}
	\out(t)\;=\;\frac{1}{2\pi\imagunit}\int_{\Gamma} \ee^{\lapVar t}\fC\left(\lapVar\fE-\fA\right)^{-1}\left(\fE\inMat\inSolSub+\fB\hat{\inp}(\lapVar)\right)\;\dz+\fD\inp(t).
\end{equation}
From the system theory language, in~\eqref{eqn:Bro:int2}, the rational function $\fC\left(\lapVar\fE-\fA\right)^{-1} \fB$  is the transfer function of \LCS from the input $\inp(t)$ to the output $\out(t)$; and the rational function $\fC\left(\lapVar\fE-\fA\right)^{-1}\left(\fE\inMat\right)$ is the mapping from the initial condition to the output. These two rational functions fully determine the system behavior. Thus, as in~\cite{BeaGM17}, the output of \LCS is decomposed into the outputs of a \LCS with a zero initial condition and input $\inp$ and a \LCS with an inhomogeneous initial condition with $\inp\equiv \zeroVec$. This will be exploited later on in designing the integration profiles. And as expected these two rational functions and their norms will be heavily used in the analysis below.

Note that, in general, the singularities of the integrand function in \eqref{eqn:LT:cf} coincide with the eigenvalues of the matrix pencil $\lapVar\fE-\fA$ and the poles of $\hat{\inp}$. Moreover, the behaviour of $\|(\lapVar\fE-\fA)^{-1}\|$ in the sectorial domain is also crucial for the construction of the integration profile $\Gamma$, therefore a characterization of it is important to justify the use of any \CIM. 

The evaluation of $\fD\inp(t)$ is independent of $\stateDim$; therefore, the main task is the approximation of the Bromwich integral, i.e., the integral in~\eqref{eqn:Bro:int}. Let us parametrize the
integration contour $\Gamma$ by $\lapVar:\R\rightarrow\Gamma$ for a suitable mapping $\lapVar(\intvar)$ so that we have
\begin{equation}\label{eqn:int:fun}
	\int_{\Gamma}\ee^{\lapVar t}\fC\hat{\stx}(\lapVar)\;\dz\;=\;\int_{\R}\fG(\intvar)\;\dintvar,\quad \fG(\intvar)\;\vcentcolon=\;\ee^{\lapVar(\intvar) t} \fC\hat{\stx}(\lapVar(\intvar))\frac{\partial\lapVar}{\partial \intvar}(\intvar).
\end{equation}
Since we are interested in approximating $\stx(t)$ within precision
$\tol$, we only consider the portion of the Bromwich integral parameterized in $[-c\pi, c\pi]$, that is
\begin{equation*}
	\int_{\R} \fG(\intvar)\;\dintvar\;\approx\; \int_{-c\pi}^{c\pi} \fG(\intvar)\;\dintvar,
\end{equation*}
for a certain truncation parameter $c \in (0, c_{\max})$. Finally, the application of a quadrature formula to approximate \eqref{eqn:Bro:int} provides a numerical approximation of $\out$, for a given time $T$, or even time windows $[T,\Lambda T]$, $\Lambda>1$, without the need to compute it at intermediate time instants. For instance, applying the trapezoidal rule yields the desired approximation $\out_N(T)$ of $\out(T)$ given by
\begin{equation}\label{eqn:quad:app}
	\out_N(T)\;\vcentcolon=\;\fD\inp(T)+\frac{c}{\imagunit N}\sum_{j=1}^{N-1}\fG(\intvar_j), \text{ with }\intvar_j\;=\;-c\pi+j\frac{2c\pi}{N},\quad j=1,\ldots,N-1.
\end{equation}
 Note that, at this stage, no projection-based \MOR has been employed here. Instead, \CIM should be regarded as an alternative to traditional time-integration schemes, such as Runge–Kutta or multistep methods.
\subsection{Construction of \texorpdfstring{$\Gamma$}{TEXT}}\label{subsection:2.1}
For the construction of the integration profile $\Gamma$ we suitably modify the method detailed in \cite[Sec.~5]{Guglielmi2021} for the elliptic profile \cite[Sec.~2.1]{Guglielmi2021}, which is based on the error bound provided in \cite[Thm.~2]{Guglielmi2020}. The \CIMs of \cite{Guglielmi2020,Guglielmi2021} consider the Cauchy problem
\begin{equation}\label{eqn:old:pro}
     \left\{\quad \begin{aligned}
		\dot{\stx}(t)\;  &= \;\fA \stx(t) +\source(t), \\
         \stx(t_0) \;&=\; \stx_0, \\
	\end{aligned}\right.
\end{equation}
and, for fixed $\stx_0$, $\source$, and target accuracy $\tol$, of \cite{Guglielmi2020,Guglielmi2021} show how to construct $\Gamma$ and how many quadrature nodes to chose so that the resulting approximation $\stx_N$ satisfies
\begin{equation}\label{eqn:state:acc}
    \max_{t\in[T,\Lambda T]}\|\stx(t)-\stx_N(t)\|\;\le\;\tol.
\end{equation}
Unlike \eqref{eqn:old:pro}, the \CS \eqref{eqn:LTI:cf} requires the construction of the profile while considering the involvement of matrices $\fB$ (input), $\fC$ (output), $\inMat$ (initial condition), and $\fE$. Furthermore, our objective is to achieve a singular integration profile applicable to any initial condition $\stx_0\in\calF$ and input function $\inp$.

We extend the framework to account for the matrices $\fB$, $\fC$, $\fE$
and $\inMat$ by carefully adapting the arguments of~\cite{Guglielmi2021}.  Moreover, to obtain an integration profile independent of the initial data, we replace~\eqref{eqn:state:acc} with
\begin{equation}\label{eqn:new:cond}
    \max_{t\in[T,\Lambda T]}\frac{\|\out(t)-\out_N(t)\|}{\|\stx_0\|+\max_{\lapVar\in\{\Gamma_{left},\Gamma_{right}\}}\|\hat\inp(\lapVar)\|}\;\le\;\tol,\quad\forall\stx_0\in\calF\quad\text{and}\quad\forall \inp\in \calU,
\end{equation}
where $\calU$ is the set of admissible input functions that we discuss in more detail in \Cref{sec:CIM:ad:inp}.
 
We now recall some details on the parametrization of $\Gamma$ through an elliptic profile and then show how its construction can be adapted to satisfy \eqref{eqn:new:cond}.
\begin{figure}[t]
\begin{center}
\input{img/Fig1.tex}
\end{center}
\caption{The integration profile $\Gamma$ from \cite{Guglielmi2020}.}
\label{fig:plot_profile1}
\end{figure}
We follow the elliptic parametrization originally proposed in \cite{Guglielmi2020} and subsequently exploited in \cite{Guglielmi2021,GugM23}. The contour $\Gamma$ is given by
\begin{equation}\label{eq:Gamma}
\lapVar_{\Gamma}(\intvar)\;\vcentcolon=\;\left\{\begin{array}{ll}
\ell_1(\intvar), \quad    & \intvar \le -\frac{\pi}{2}, \\[1mm]
\lapVar(\intvar), \quad  & -\frac{\pi}{2} \le \intvar \le \frac{\pi}{2}, \\[1mm]
\ell_2(\intvar), \quad  & \intvar\ge \frac{\pi}{2}, \\
\end{array}\right.
\end{equation}
where, for the constant parameters $A_1, A_2, A_3$ to be determined,
\[
\lapVar(\intvar)\; \vcentcolon=\; A_1\cos \intvar+\imagunit A_2\sin \intvar+A_3
\]
parametrizes an elliptic arc; and
\[
\ell_{1}(\intvar) \;\vcentcolon= \;A_3 + \intvar+\frac{\pi}{2}-\imagunit\left(A_2 - \tilde{d}\left(\intvar+\frac{\pi}{2}\right)\right),\quad
\ell_2(\intvar) \;\vcentcolon=\; A_3-\intvar+\frac{\pi}{2}+\imagunit\left(A_2+\tilde{d}\left(\intvar-\frac{\pi}{2}\right)\right)
\]
parametrize two half-lines; see \Cref{fig:plot_profile1}. The parameter $\tilde{d}$ encodes solely the inclination of the half-line and, as we will discuss shortly, is not relevant for the purposes of our approximation. The choice of parameters $A_1, A_2$ ad $A_3$ is discussed in \cite{Guglielmi2020} and is fundamentally
based on knowledge of the $\varepsilon$-pseudospectrum of $\fA$ in a rectangular region surrounding the rightmost section of the spectrum. The $\varepsilon$-pseudospectrum of $\fA$ is the set of points $\lapVar$ of the complex plane such that $\|(\lapVar\fI_{\stateDim}-\fA)^{-1}\|\ge\frac{1}{\varepsilon}$ holds. However, in the context of the problem \eqref{eqn:LT:cf} it is more relevant to consider $\|\fC(\lapVar\fE-\fA)^{-1}\fE\inMat\|$ and $\|\fC(\lapVar\fE-\fA)^{-1}\fB\|$ over a suitable region of the complex plane. This is perfectly in line with the output formula~\eqref{eqn:Bro:int2} where these two rational (transfer) functions determine the output via the inverse Laplace transform. 

Practically, the only part of the integration contour $\Gamma$~\eqref{eq:Gamma} employed is the elliptical arc expressed through $\lapVar$. To enhance quadrature performance, the domain of $\lapVar$ is expanded to encompass a rectangular domain in the complex plane by
\begin{equation}\label{eq:mapping}
	\lapVar(\intvar+\imagunit \intVarTwo)\;\vcentcolon=\;A_1(\intVarTwo)\cos \intvar+\imagunit A_2(\intVarTwo)\sin \intvar+A_3(\intVarTwo), \quad \intvar\in \left[-\frac{\pi}{2},\frac{\pi}{2}\right],\; \intVarTwo\in[-a,a],
\end{equation}
for a certain parameter $a\in\R^{+}$ to be determined. The complex function \eqref{eq:mapping} is required to be holomorphic in the rectangle
\[
\calR\; \vcentcolon=\; [-\pi/2,\pi/2] \times \imagunit[- a,  a]
\]
and thus the Cauchy-Riemann equations must hold. In this way, one obtains that $A_3$ is necessarily a constant and
\begin{eqnarray}
\label{eq:A1}
A_1(y)&=& a_1\ee^\intVarTwo+a_2\ee^{-\intVarTwo},\\
\label{eq:A2}
A_2(y)&=& a_2\ee^{-\intVarTwo}-a_1\ee^\intVarTwo,
\end{eqnarray}
with $a_1,a_2\in\R$. The resulting mapping is holomorphic throughout the specified rectangular domain, which implies exponential convergence of the trapezoidal quadrature rule when applied to the integral obtained after parametrizing the elliptic contour by $\lapVar(\intvar)$; see \cite[Thm.~2]{Guglielmi2020}.
\begin{figure}[t]
\begin{center}
\input{img/Fig2.tex}
\caption{The ellipse $\Gamma_{left}$, the integration profile $\Gamma$, and the ellipse $\Gamma_{right}$. \label{fig_ell}}
\end{center}
\end{figure}
The rectangle $\calR$ is mapped in an elliptical ring-shaped region. In particular, the upper horizontal side of the rectangle is mapped into the inner ellipse $\Gamma_{left}$ in \Cref{fig_ell}, which is selected so that it is external to the sets given by 
\begin{align}\label{eqn:new:pse:set}
\begin{aligned}
   \calP_\varepsilon(\fA,\fC,\fE,\inMat)\;\vcentcolon=&\; \left
   \{\lapVar\in\C\;|\;\|\fC(\lapVar\fE-\fA)^{-1}\fE\inMat\|\ge\frac{1}{\varepsilon}\right\},\\
   \calP_\varepsilon(\fA,\fB,\fC,\fE)\;\vcentcolon=&\; \left
   \{\lapVar\in\C\;|\;\|\fC(\lapVar\fE-\fA)^{-1}\fB\|\ge\frac{1}{\varepsilon}\right\}
   \end{aligned}
\end{align}
for a suitable value of $\varepsilon$. Specifically, we fix the center of the ellipse $\lapVar^L$, its right intersection with the real axis $\lapVar^R$ and an interpolation point ${\lapVar^B}$. The center $\lapVar^L$ is chosen such that ${\ee}^{\lapVar^L T} = {\eps}$ holds, where $\eps$ is the working machine precision and $T$ the time of interest, while $\lapVar^R$ is greater than or equal to the rightmost intersection point with the set \eqref{eqn:new:pse:set} in the real axis. The interpolation point ${\lapVar^B}=d + \imagunit r$ is chosen in such a way that the ellipse encloses \eqref{eqn:new:pse:set} for a suitable choice of $\varepsilon$. Next, the half-elliptic integration profile
\begin{equation*} 
\Gamma:\; \lapVar(x)=(a_1+a_2)\cos \intvar+\imagunit(a_2-a_1)\sin \intvar+A_3,
\end{equation*}
is determined, with coefficients $a_1$, $a_2$, $A_3$ depending on the unique free parameter $a$; see \cite{Guglielmi2020} for more details. Indeed, imposing the ellipse $\Gamma_{left}$ to be centered on $\lapVar^L$, and to pass through the points $z^R$, and $\lapVar^B=d+\imagunit r$, we get
\begin{equation}\label{eq1}
a_1\ee^a+a_2\ee^{-a}=\lapVar^R-\lapVar^L,\quad
a_2\ee^{-a}-a_1\ee^a=\frac{r}{\sin(\theta)},\quad
A_3=\lapVar^L,
\end{equation}
where
\begin{equation*} 
\theta=\arccos\left(\frac{d-z^L}{z^R-z^L}\right)\,.
\end{equation*}
Solving \eqref{eq1} for $a_1,a_2, A_3$ yields
\begin{align*}
\begin{aligned}
a_1\;=\; \frac{\ee^{-a}}{2}\left(\lapVar^R-\lapVar^L-\frac{r}{\sin(\theta)}\right), 
\quad a_2 \; = \; \frac{\ee^{a}}{2}\left(\lapVar^R-\lapVar^L+\frac{r} {\sin(\theta)}\right),\quad
A_3 \; = \; \lapVar^L;
\end{aligned}
\end{align*}
which depend only on the real parameter $a$. By replacing $\|(\lapVar\fI_{\stateDim}-\fA)^{-1}\|$ with $\|\fC(\lapVar\fE-\fA)^{-1}\fE\inMat\|$ and $\|\fC(\lapVar\fE-\fA)^{-1}\fB\|$, we adopted the procedure detailed in \cite[Sec.~3]{Guglielmi2020}
for the construction of $\Gamma_{left}$, thus bringing the input-output perspective from systems theory into the \CIM framework.

 Once $\Gamma_{left}$ is fixed, it only remains to determine the parameter $a$, this is done based on the error estimate stated in \cite[Thm.~2]{Guglielmi2020}. In particular, the goal is to minimize the number of quadrature points employed for a fixed specified accuracy $\tol$. The solution of this problem provides the parameter $a$ and the estimated number of quadrature points $\tilde{N}$ for which \eqref{eqn:state:acc} is valid. As already discussed, in \cite{Guglielmi2020,Guglielmi2021} the procedure is stated for fixed $\stx_0$ and $\source$ then, $\Gamma$ is determined. Here we suitably modify this approach to achieve \eqref{eqn:new:cond}. From \cite[Thm.~2, Sec.~3.2]{Guglielmi2020} we can write
\begin{align}
\calE(N)\;\vcentcolon=&\;\max_{t\in[T,\Lambda T]}\frac{\|\out(t)-\out_N(t)\|}{\|\inSolSub\|+\max_{\lapVar\in\{\Gamma_{left},\Gamma_{right}\}}\|\hat\inp(\lapVar)\|}\;\label{eqn:err:def}\\
        \le&\;\frac{1}{\|\inSolSub\|+\max_{\lapVar\in\{\Gamma_{left},\Gamma_{right}\}}\|\hat\inp(\lapVar)\|}\frac{2\pi c(a)M_{right}(a)+\pi M_{left}}{\ee^{\tfrac{a}{c(a)}N}-1},\label{eqn:err:bound}\\
       \text{where}\quad M_{left}\;\vcentcolon=&\;\frac{1}{2\pi} \max_{\lapVar\in\Gamma_{left}}\Big\|\ee^{\lapVar\phi(\lapVar)}\fC\left(\lapVar\fE-\fA\right)^{-1}\left(\fE\inMat\inSolSub+\fB{\hat \inp(\lapVar)}\right)\lapVar'\Big\|~\mbox{and}~\label{eqn:def:const1}\\
       M_{right}(a)\;\vcentcolon=&\;\frac{1}{2\pi} \max_{\lapVar\in\Gamma_{right}(a)}\Big\|\ee^{\lapVar\phi(\lapVar)}\fC\left(\lapVar\fE-\fA\right)^{-1}\left(\fE\inMat\inSolSub+\fB{\hat \inp(\lapVar)}\right)\lapVar'\Big\|,\label{eqn:def:const2}
\end{align}
with
\begin{align*}
     \phi(\lapVar)\;\vcentcolon=&\;\left\{\begin{array}{ll}
T, \quad    & \real(\lapVar) < 0, \\[1mm]
0, \quad  & \real(\lapVar) = 0, \\[1mm]
\Lambda T, \quad    & \real(\lapVar) > 0, \\
\end{array}\right..
\end{align*}
We note that $M_{left}$ in~\eqref{eqn:def:const1} and \eqref{eqn:def:const2} can be bounded by 
\begin{align}
       M_{left}\;\le&\;\widetilde{M}_{1,left}\|\inSolSub\|
+\widetilde{M}_{2,left}\max_{\lapVar\in\Gamma_{left}}\|{\hat \inp(\lapVar)}\|,\label{eqn:def:const:12}
\end{align}
where
\begin{align*}
\widetilde{M}_{1,left}\;\vcentcolon=&\;\frac{1}{2\pi} \max_{\lapVar\in\Gamma_{left}}\Big\|\ee^{\lapVar\phi(\lapVar)}\fC\left(\lapVar\fE-\fA\right)^{-1}\fE\inMat\lapVar'\Big\|~\mbox{and}\\
\widetilde{M}_{2,left} \;\vcentcolon=\;&\frac{1}{2\pi} \max_{\lapVar\in\Gamma_{left}}\Big\|\ee^{\lapVar\phi(\lapVar)}\fC\left(\lapVar\fE-\fA\right)^{-1}\fB\lapVar'\Big\|.
\end{align*}
Similarly $M_{right}$ in~\eqref{eqn:def:const2} is bounded by 
\begin{align}
         M_{right}(a)\;\le&\;\widetilde{M}_{1,right}(a)\|\inSolSub\|
+\widetilde{M}_{2,right}(a) \max_{\lapVar\in\Gamma_{right}(a)}\|{\hat \inp(\lapVar)}\|,\label{eqn:def:const:22}
\end{align}
where 
\begin{align*}
\widetilde{M}_{1,right}(a)\;\vcentcolon=&\;\frac{1}{2\pi} \max_{\lapVar\in\Gamma_{right}(a)}\Big\|\ee^{\lapVar\phi(\lapVar)}\fC\left(\lapVar\fE-\fA\right)^{-1}\fE\inMat\lapVar'\Big\|~\mbox{and}~\\
\widetilde{M}_{2,right}(a) \;\vcentcolon=\;&\frac{1}{2\pi} \max_{\lapVar\in\Gamma_{right}(a)}\Big\|\ee^{\lapVar\phi(\lapVar)}\fC\left(\lapVar\fE-\fA\right)^{-1}\fB\lapVar'\Big\|.
\end{align*}
Substituting \eqref{eqn:def:const:12} and \eqref{eqn:def:const:22} into \eqref{eqn:err:bound} we get
\begin{align}\label{eqn:err:est}
    \begin{aligned}
         \calE(N)\;\le&\;\frac{2\pi c(a)(\widetilde M_{1,right}(a)+\widetilde M_{2,right}(a))+\pi (\widetilde M_{1,left}+\widetilde M_{2,left})}{\ee^{\tfrac{a}{c(a)}N}-1}.
    \end{aligned}
\end{align}
Now, if we set the right-hand side of \eqref{eqn:err:est} equal to the desired accuracy $\tol$ and express $N$ as a function of $a$, we get
\begin{align*}
    \begin{aligned}
        N\;=&\;\frac{c(a)}{a}\left(\log\left(\tol+q(a)\right)-\log\left(\tol\right)\right),\\
        q(a)\;\vcentcolon=&\;2\pi c(a)(\widetilde M_{1,right}(a)+\widetilde M_{2,right}(a))+\pi (\widetilde M_{1,left}+\widetilde M_{2,left});
    \end{aligned}
\end{align*}
thus, we can compute $\tilde N$ by asking that the number of quadrature points to reach $\tol$ is as small as possible, i.e., 
\begin{align}\label{eqn:opt:num:QP}
    \begin{aligned}
        \tilde N\;\vcentcolon=\;\min_{a\in[0,a_{\max}]} \frac{c(a)}{a}\left(\log\left(\tol+q(a)\right)-\log\left(\tol\right)\right),
    \end{aligned}
\end{align}
where $a_{\max}$ is determined based on the value of $\ee^{\lapVar(0+\imagunit a_{\max})\Lambda T}$, that is, based on the location of the vertex of $\Gamma_{right}(a_{\max})$ in \Cref{fig_ell}. We refer to the discussion in \cite[Sec.~5]{Guglielmi2021} for more details on the choice of parameters and the optimization procedure to solve \eqref{eqn:opt:num:QP}.
\subsection{About the computational cost}\label{sec2:comp-cost}
As a standard in projection-based \MOR we distinguish the computational cost for the offline and online phases. 
\begin{itemize}
    \item \textbf{Offline phase:} The first step, for given a $\tol$, is the construction of the integration profile $\Gamma$ such that \eqref{eqn:new:cond} holds. This requires computing the largest singular value of the matrices $\fC(\lapVar\fE-\fA)^{-1}\fE\inSolSub$ and $\fC(\lapVar\fE-\fA)^{-1}\fB$ in a suitable region of the complex plane. In other words, the goal is to find the maximum value of these two transfer functions over the region of interest. 
    Then \eqref{eqn:opt:num:QP} has to be solved and the main computational burden is the estimation of the constants \eqref{eqn:def:const2}, which involves solving eigenvalue problems of dimension $\stateDim$. Once $\Gamma$ is determined, a second stage of the offline phase of \CIM for \CS consists of computing (eventually exploiting parallelization) and storing the matrices
    \begin{subequations}\label{eqn:small:size:mat}
        \begin{align}
            \fK_{1,j}\;=&\;\frac{\partial\lapVar}{\partial\intvar}(\intvar_j)\fC\left(\lapVar(\intvar_j)\fE-\fA\right)^{-1}\fE\inMat\in\C^{\outDim\times \stateDimRed_{\calF}},\quad j=1,\ldots,\tilde N-1,\label{eqn:K1:x0}\\ 
            \fK_{2,j}\;=&\;\frac{\partial\lapVar}{\partial\intvar}(\intvar_j)\fC\left(\lapVar(\intvar_j)\fE-\fA\right)^{-1}\fB\in\C^{\outDim\times\inpDim},\quad j=1,\ldots,\tilde N-1.\label{eqn:K2:u}
        \end{align}
    \end{subequations}
    Note that neither \eqref{eqn:K1:x0} nor \eqref{eqn:K2:u} depend on $\stateDim$ in terms of storage costs.
    \item \textbf{Online phase:} The output $\out(t)$, for $t\in[T,\Lambda T]$, can be quickly evaluated for any input function $\inp \in\calU$ and any $\inSolSub$ using the precomuted small-size matrices \eqref{eqn:small:size:mat}. Indeed, from \eqref{eqn:quad:app} we can write
    \begin{align*}
        \begin{aligned}
            \out_{\tilde N}(t)\;=\;\fD\inp(t)+\frac{c}{\imagunit \tilde N}\sum_{j=1}^{\tilde N-1} \ee^{\lapVar(\intvar_j)t}\left(\fK_{1,j}\inSolSub+\fK_{2,j}\hat\inp(\lapVar(\intvar_j))\right), \quad\intvar_j\;=\;-c\pi+j\frac{2c\pi}{\tilde N}.
        \end{aligned}
    \end{align*}
Since this evaluation relies entirely on the precomputed, small-size matrices in~\eqref{eqn:small:size:mat}, its cost is independent of the state dimension $\stateDim$.
\end{itemize}
We conclude with the following observations. If the integrand is conjugate symmetric and $\Gamma$ is symmetric with respect to the real axis, the number of addends, and thus the number of stored matrices to be computed, can be halved. Finally, despite the use of a simple trapezoidal quadrature rule, the error in the spectral norm between $\out_N(T)$ and $\out(T)$ decays exponentially with respect to the number of quadrature points due to the regularity properties of the integrand function; see \eqref{eqn:err:bound} and \cite{TreW14} for more details on the exponentially convergent trapezoidal quadrature rule. This means that high accuracy over the time window can be reached with only a few quadrature points, requiring few matrices to be precomputed and stored.

\subsection{Discussion on the efficency of \CIM and admissible input functions.}\label{sec:CIM:ad:inp}

The \CIM approach is grounded in two fundamental assumptions that we now recall: firstly, the singularities of the integrand function in \eqref{eqn:inv:Lap} reside within a sectorial area of the complex plane where $\real(\lapVar)< \gamma$; and secondly, the integrand function in \eqref{eqn:inv:Lap} decays as $|\lapVar|\rightarrow\infty$. As discussed in \Cref{sec2}, the singularities of the integrand function are characterized by the generalized eigenvalues of the matrix pair $(\fA,\fE)$ and the poles of $\hat{\inp}(\lapVar)$. Specifically, situations where these eigenvalues exhibit large imaginary components with minimal real components, or even zero real component when $\fE^{-1}\fA$ is skew-symmetric pose challenges for any \CIM. This is because such conditions cause $\Gamma$ to almost align vertically before deviating to the left in the complex plane. Consequently, numerous quadrature points are required in the region where $\Gamma$ is nearly vertical, significantly increasing computational costs. Consider the scenario of discretizing the transport equation as an illustrative case. By applying central finite difference methods and appropriate boundary conditions, one typically arrives at a skew-symmetric matrix whose eigenvalues possess an imaginary component that is proportional to the inverse of the spatial discretization step. Recently, efforts to partially alleviate this issue include leveraging the regularity of initial data to enhance the convergence rate of the quadrature method, as discussed in \cite{HorG24}, or employing generalized convolution quadratures \cite{GuoL25}.

A challenging category of issues for \CIM when addressing the equation $\fE\dot \stx=\fA\stx$ arises from the presence of a highly non-normal matrix $\fE^{-1}\fA$. It is well established that, in such scenarios, the values of $\|(\lapVar\fE-\fA)^{-1}\|$ can become exceedingly large, even when distant from the generalized spectrum. A significant resolvent norm induces instability in any \CIM; hence, matrices whose spectra are entirely located on the real axis can still pose difficulties if they exhibit high non-normality. Nevertheless, within the context of \LCS with a zero initial solution, or when initial solutions reside in a low-dimensional subspace defined by the columns of a tall rectangular matrix $\inMat$, the focal quantities are
\begin{equation}\label{eqn:mod:res}
    \|\fC(\lapVar\fE-\fA)^{-1}\fB\|\quad\quad  \text{and}\quad\quad\|\fC(\lapVar\fE-\fA)^{-1}\fE\inMat\|,
\end{equation}
which could be significantly smaller than $\|(\lapVar\fE-\fA)^{-1}\|$, thus facilitating efficient \CIM-\LCS, even in the presence of strong non-normal behavior of $\fE^{-1}\fA$. 
This is due to the input-output based system theory perspective on \MOR: one is not necessarily interested in the whole state $\stx(t)$, but the output $\out(t)$ due to the input $\inp(t)$ and the initial condition $\stx_0$.
Let us further analyze the scenario where one of the two norms in \eqref{eqn:mod:res} is significantly larger than the other for $\lapVar\in\Gamma$. In this case, it would be beneficial to design separate integration profiles to deal with the initial solution and the input function, according to the findings of \cite{BeaGM17} where \MOR for \LCS with initial conditions were considered. Specifically, the output of \LCS can be decomposed into the outputs of a \LCS with a zero initial solution and input $\inp$ and a \LCS with an inhomogeneous initial condition while maintaining $\inp\equiv \zeroVec$. This strategy would effectively reduce the total number of necessary linear system solutions.

Regarding $\inp$, it is crucial to consider the location of singularities in potential input functions during the construction of $\Gamma$. Assume that all generalized eigenvalues are real and negative and that $\|(\lapVar\fE-\fA)^{-1}\|$ becomes significantly large only ``near'' these eigenvalues. In such cases, it is preferable for $\Gamma$ to be positioned as near as possible to the real axis to reduce the number of quadrature points needed for achieving a desired accuracy $\tol$. Nevertheless, this positioning might exclude input functions with poles possessing a sufficiently large imaginary part, such as $\hat{\inp}=\lapVar/(\lapVar^2+\alpha^2)$, which equates to the Laplace transform of $\inp(t)=\cos(\alpha t)$. Concerning the second assumption, $\|(\lapVar\fE-\fA)^{-1}\|$ asymptotically decreases as $\lapVar^{-1}$. Consequently, any input function satisfying $\hat{\inp}(\lapVar)= \calO(\lapVar^\nu)$, with $\nu<1$ and for $|\lapVar|\rightarrow\infty$, remains technically permissible, as long as the contour accommodates its poles.

We conclude by examining the use of the Heaviside function (also known as the step function) as input. If the discontinuity occurs at a time $t_0<T$, the Laplace transform of the input is given by $\hat \inp(\lapVar) =\frac{\ee^{-\lapVar t_0}}{\lapVar}$. This is compatible with the two assumptions, as long as zero is counted as a pole. Nevertheless, it reduces the exponential decay rate since the exponent in \eqref{eqn:inv:Lap} becomes $\ee^{\lapVar (T-t_0)}$. Thus, it affects the decay properties of the integrand function in $\Gamma$, influencing the choice of the hyperparameter $z_L$. The key point is that discontinuous input functions can still be handled by \CIM, but the construction of $\Gamma$ must be adapted to reflect these characteristics of the input signal.

\section{A non-linear \MOR method for parametric control systems using \CIM}\label{sec3}

This section develops a nonlinear projection-based \MOR method for parametric \CS of type \eqref{eqn:LTI:cf:par}, extending the \CIM-based framework of \cite{GugM23}. Unlike \cite{GugM23}, our focus is on approximating a specific quantity of interest, namely the output $\out$, rather than the full state; this shift leads us to adopt a Petrov--Galerkin projection, rather than a simpler Galerkin projection, in constructing the \ROM. Moreover, the projection spaces are constructed so as to be independent of any initial condition in a given subspace $\inSolSub$ and of any input function belonging to the set of admissible functions. This property constitutes a notable distinction from the approach developed in\cite{GugM23}.

There are three assumptions that we must discuss to have an efficient \MOR procedure. The first is that, for the desired precision $\tol$, we are able to identify a unique profile $\Gamma$ and a unique number of quadrature points $N$ such that 
\begin{align*}
   \calE(\prmtr,\inp,\stx_0)\vcentcolon=\max_{t\in[T,\Lambda T]}\frac{ \|\out(t,\prmtr)-\out_{N}(t,\prmtr)\|}{\|\stx_0\|+\max_{\lapVar\in\{\Gamma_{left},\Gamma_{right}\}}\|\hat\inp(\lapVar)\|}
   \le\tol,\text{ for all }\prmtr\in\prmtrSet,\;\stx_0\in\calF,\;\inp\in \calU;
\end{align*}
where $\out_{N}(t,\prmtr)$ is the approximation of $\out(t,\prmtr)$ through \CIM, i.e.,
\begin{align}
    \out_N(t,\prmtr)\;\vcentcolon=&\;\fD(\prmtr)\inp(t)+\frac{c}{\imagunit N}\sum_{j=1}^{N-1}\ee^{\lapVar(\intvar_j)t}\fC(\prmtr)\hat\stx(\lapVar(\intvar_j),\prmtr)\frac{\partial\lapVar}{\partial \intvar}(\intvar_j),\label{eqn:out:par:CIM}\\
    \hat \stx(\lapVar(\intvar),\prmtr)\;\vcentcolon=&\;\left(\lapVar(\intvar)\fE(\prmtr)-\fA(\prmtr)\right)^{-1}\left(\fE(\prmtr)\stx_0+\fB(\prmtr)\hat\inp(\lapVar(\intvar))\right)\label{eqn:stx:par:CIM}.
\end{align}
Note that this directly implies that $\lapVar(\intvar_j)\fE(\prmtr)-\fA(\prmtr)$ has to be invertible for all $\prmtr\in\prmtrSet$ and $j=1,\ldots,N-1$. A possible methodology to assess the validity of this assumption is addressed in \cite[Sec.~3.5]{GugM23}. Shortly, the challenge arises when the variation of the parameters within $\prmtrSet$ leads to significant fluctuations in the location of the singularities of the integrand function and in the norm $\|(\lapVar(\intvar_j)\fE(\prmtr)-\fA(\prmtr))^{-1}\|$.

The second assumption is a classical one for \MOR of parametric problems: the affine dependence of the system matrices on the parameters. Thus we assume we can write 
\begin{align}
\begin{aligned}\label{eqn:CS:par:mat}
        \fE(\prmtr)\;=&\;\sum_{i=1}^{Q_{\fE}}\epsilon_i(\prmtr)\fE_i,\quad \;\;\fA(\prmtr)\;=\;\sum_{j=1}^{Q_{\fA}}\alpha_j(\prmtr)\fA_j,\\
\fB(\prmtr)\;=&\;\sum_{k=1}^{Q_{\fB}}\beta_k(\prmtr)\fB_k,\quad\fC(\prmtr)\;=\;\sum_{l=1}^{Q_{\fC}}\gamma_l(\prmtr)\fC_l
\end{aligned}
\end{align}
for some analytic functions $\epsilon_i$, $\alpha_j$, $\beta_k$, $\gamma_l:\prmtrSet\rightarrow\R$, constant matrices $\fE_i$, $\fA_j$, $\fB_k$, $\fC_l$, and positive integers $Q_{\fE}$, $Q_{\fA}$, $Q_{\fB}$, $Q_{\fC}\ll\stateDim$.

The third assumption was already discussed at the beginning of \Cref{sec2}. The subspace to which $\stx_0$ belongs must have a dimension of $\stateDimRed_{\calF}$, with $\stateDimRed_{\calF}\ll\stateDim$. Thus, $\stx_0$ is expressed as $\stx_0=\inMat\inSolSub$, where $\inSolSub\in\R^{\stateDimRed_{\calF}}$ and $\inMat\in\R^{\stateDim\times \stateDimRed_{\calF}}$ are a matrix with orthonormal columns spanning the subspace $\calF$. This assumption has been commonly used in previous works on \MOR~ for  \LCS with inhomogeneous initial conditions, see, e.g., \cite{BeaGM17,HeiRA11,BauBF14,SchV23}. 
\subsection{From the \FOM to the \ROM: the \CIM-\MOR for linear parametric control systems}
To ensure brevity in this section, we utilize the shorthand notation $\lapVar_j\equiv\lapVar(\intvar_j)$. The fundamental concept behind projection-based \ROM using \CIM involves the estimation of $\hat \stx(\lapVar_j,\prmtr)$ by employing $\hat{\fq}(\lapVar_j;\prmtr)\vcentcolon = \fV_j\hat \stx_{\mathrm{r}}(\lapVar_j;\prmtr)$, with $\hat \stx_{\mathrm{r}}(\lapVar_j;\prmtr)\in \C^{\stateDimRed_j}$ and $\fV_j \in\C^{\stateDim\times\stateDimRed_j}$, with $\fV_j$ having orthonormal columns and $\stateDimRed_j\ll\stateDim$ for each $j=1,\ldots,N-1$. We therefore plug $\hat{\fq}(\lapVar_j;\prmtr)$ in place of $\hat{\stx}(\lapVar_j;\prmtr)$ into \eqref{eqn:stx:par:CIM}, which yields the residual
\begin{equation}\label{eqn:red:par:CIM}
    \res(\lapVar_j,\prmtr)\;\vcentcolon=\;(\lapVar_j\fE(\prmtr)-\fA(\prmtr))\fV_j\hat \stx_{\mathrm{r}}(\lapVar_j,\prmtr)-\fE(\prmtr)\inMat\inSolSub-\fB(\prmtr)\hat{\inp}(\lapVar_j)
\end{equation}
Then, we impose the residual to be orthogonal to the subspace generated by the orthonormal columns of a certain matrix $\fW_j\in\C^{\stateDim\times\stateDimRed_j}$, i.e., $\fW_j^{*}\res(\lapVar_j;\prmtr)=\zeroVec$, which gives
\begin{equation}\label{eqn:rstate:par:CIM}
   (\lapVar_j\fE_{\rr,j}(\prmtr)-\fA_{\rr,j}(\prmtr))\hat\stx_{\mathrm{r}}(\lapVar_j,\prmtr)\;=\;\fX_{\stateDimRed,j}(\prmtr)\inSolSub+\fB_{\rr,j}(\prmtr)\hat\inp(\lapVar_j)
\end{equation}
with
\begin{align}\label{eqn:proj:mat}
\begin{aligned}
\fE_{\rr,j}(\prmtr)\;\vcentcolon=&\;\;\fW_j^{*}\fE(\prmtr)\fV_j,\qquad\fA_{\rr,j}(\prmtr)\;\vcentcolon=&\fW_j^{*}\fA(\prmtr)\fV_j,\\
\;\fB_{\rr,j}(\prmtr)\;\vcentcolon=&\;\;\fW_j^{*}\fB(\prmtr),\qquad\;\;\, \fX_{\stateDimRed,j}(\prmtr)\;\vcentcolon=&\fW^{*}_{j}\fE(\prmtr)\inMat.
\end{aligned}
\end{align}
This is the so-called Petrov-Galerkin projection; if $\fW_j=\fV_j$ we have the standard Galerkin projection. The approximated output is finally given by
\begin{equation}\label{eqn:out:par:CIM:red}
     \out_{\mathrm{r},N}(t,\prmtr)\;\vcentcolon=\;\fD(\prmtr)\inp(t)+\frac{c}{\imagunit N}\sum_{j=1}^{N-1}\ee^{\lapVar_j t}\fC_{\mathrm{r},j}(\prmtr)\hat\stx_{\mathrm{r}}(\lapVar_j,\prmtr)\frac{\partial\lapVar}{\partial \intvar}(\intvar_j),\quad \fC_{\mathrm{r},j}(\prmtr)\;\vcentcolon=\;\fC(\prmtr)\fV_j.
\end{equation}
Note that the affine decomposition assumption \eqref{eqn:CS:par:mat} and the restriction of the initial condition to a small dimensional subspace ensure that, for each istance of $\prmtr\in\prmtrSet$, the matrices in \eqref{eqn:proj:mat} and \eqref{eqn:out:par:CIM:red} can be formed efficiently, i.e., with a computational cost independent of $\stateDim$; see, e.g., \cite[Sec.~3.3]{HesRS16}, \cite[Sec.~3.3]{BenGW15}. We emphasize that due to the \CIM framework we employ to approximate the output at a specific time interval, the reduced order quantities in~\eqref{eqn:proj:mat} vary with the quadrature nodes (frequency sampling points). In other words, there is not a single parametric reduced \LCS. The reduced state-space quantities  vary with the quadarute node. 

We conclude this section by introducing the adjoint problem, i.e.,
\begin{equation}\label{eqn:adj:par:CIM}
    \left(\lapVar_j\fE(\prmtr)-\fA(\prmtr)\right)^{*}\hat \fp(\lapVar_j,\prmtr)=\fC(\prmtr)^{*}.
\end{equation}
Assuming $\hat \fp(\lapVar_j,\prmtr)\in\C^{\stateDim\times\outDim}$ is well approximated by $\fW_j\hat \fp_{\rr}(\lapVar_j,\prmtr)$, we define the so-called dual residual 
\begin{equation}\label{eqn:dual:res}
    \res_{\hat \fp}(\lapVar_j,\prmtr)\;\vcentcolon=\;(\lapVar_j\fE(\prmtr)-\fA(\prmtr))^{\ast}\fW_j\hat \fp_{\rr}(\lapVar_j,\prmtr)-\fC(\prmtr)^{*},
\end{equation}
and the associated reduced order problem 
\begin{equation}\label{eqn:CIM:red:adj}
 (\lapVar_j\fE_{\rr,j}(\prmtr)-\fA_{\rr,j}(\prmtr))^{*}\hat\fp_{\mathrm{r}}(\lapVar_j,\prmtr)\;=\;\fC_{\rr,j}(\prmtr)^{*},
\end{equation}
obtained from \eqref{eqn:dual:res} by imposing the orthogonality condition $\fV_j^{*}\res_{\hat \fp}(\lapVar_j,\prmtr)=\zeroVec$.
\subsection{Greedy spaces generation}
In this section, we discuss how the subspaces $\fV_j$ and $\fW_j$ are constructed for $j=1,\ldots,N-1$. To achieve this, we utilize the so-called greedy algorithm, which is essentially an iterative process in which, during each iteration, one or more vectors of dimension $\stateDim$ are incorporated into the approximation spaces. This process aims to potentially enhance the overall approximation capacity of the basis set, necessitating only a few costly evaluations at each step. A crucial aspect of the greedy algorithm is the presence of an error estimate $\Delta(\prmtr)$, which anticipates the error resulting from the \MOR, specifically the error introduced by substituting the \FOM with size $\stateDim$ by the \ROM with size $\stateDimRed$. This method is referred to as a weak-greedy algorithm to distinguish it from the strong-greedy algorithm, which utilizes direct error measurement. In a strong greedy algorithm, it is necessary to assess the \FOM for every parameter instance within the parametric domain, which conflicts with the \MOR principle. Therefore, it is crucial that the error estimator $\Delta(\prmtr)$ can be computed efficiently for every parameter $\prmtr \in \prmtrSet$. An important characteristic of this technique is its ability to form subspaces that maintain the Kolmogorov $n$-width decay type associated with the original problem. This aspect is examined in \cite{BinCDDPW11}.

The next lemma provides the error estimate that we will employ to construct $\fV_j$ and $\fW_j$ via the weak-greedy algorithm.

\begin{lemma}[Upper error bound for the \CIM-\ROM]\label{lemma:err:est}
    For all $\inSolSub \in\R^{\stateDimRed_{\calF}}$, $\inp\in\calU$, and $\prmtr\in\prmtrSet$ it holds that
\begin{equation*}
     \calE_{\rr}(\prmtr,\inp,\inMat)\;\le\;\Delta(\prmtr)
\end{equation*}
    with
    \begin{subequations}
        \begin{align}
             \calE_{\rr}(\prmtr,\inp,\inMat)\vcentcolon=&\max_{t\in[T,\Lambda T]}\frac{ \|\out_N(t,\prmtr)-\out_{\rr,N}(t,\prmtr)\|}{\|\inSolSub\|+\max_{j=1,\ldots,N-1}\|\hat\inp(\lapVar_j)\|},\label{eqn:err:red:par:CIM}\\
             \Delta(\prmtr)\vcentcolon=&\sum_{j=1}^{N-1}w_j\|(\lapVar_j\fE(\prmtr)-\fA(\prmtr))^{-1}\|\|\res_{\hat \fp}(\lapVar_j,\prmtr)\| \left(\|\res_{\inSolSub}(\lapVar_j,\prmtr)\|+ \|\res_{\inp}(\lapVar_j,\prmtr)\|\right)\label{eqn:err:est:red:par:CIM}
        \end{align}
    \end{subequations}
    where
    \begin{subequations}
        \begin{align}
            w_j\;\vcentcolon=&\;\frac{c}{N}\ee^{\real(z_j)t_j}\Bigg|\frac{\partial\lapVar}{\partial \intvar}(\intvar_j)\Bigg|,\quad t_j=\begin{cases}
                \Lambda T, & \mbox{if }\real(\lapVar_j)\ge 0\\
                 T, & \mbox{if }\real(\lapVar_j)<0
            \end{cases},\quad\inMat(\prmtr)\;\vcentcolon=\;\fE(\prmtr)\inMat,\label{eqn:weights:CIM}\\
             \res_{\inSolSub}(\lapVar_j,\prmtr)\;\vcentcolon=&\;(\lapVar_j\fE(\prmtr)-\fA(\prmtr))\fV_j(\lapVar_j\fE_{\rr,j}(\prmtr)-\fA_{\rr,j}(\prmtr))^{-1}\fW^{*}_j\inMat(\prmtr)-\inMat(\prmtr),\label{eqn:res:ini:par:CIM}\\
          \res_{\inp}(\lapVar_j,\prmtr)\;\vcentcolon=&\;(\lapVar_j\fE(\prmtr)-\fA(\prmtr))\fV_j(\lapVar_j\fE_{\rr,j}(\prmtr)-\fA_{\rr,j}(\prmtr))^{-1}\fW^{*}_j\fB(\prmtr)-\fB(\prmtr),\label{eqn:res:inp:par:CIM}
        \end{align}
    \end{subequations}
    and $\inMat$ is the orthonormal matrix whose columns span the subspace of the admissible initial solutions.
    \end{lemma}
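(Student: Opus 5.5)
The plan is to bound the output error quadrature node by quadrature node, and at each node to use the standard primal--dual (adjoint) error representation for Petrov--Galerkin reduced models. Subtracting \eqref{eqn:out:par:CIM:red} from \eqref{eqn:out:par:CIM} cancels the feedthrough term $\fD(\prmtr)\inp(t)$. By the triangle inequality,
\begin{equation*}
\|\out_N(t,\prmtr)-\out_{\rr,N}(t,\prmtr)\|\le \frac{c}{N}\sum_{j=1}^{N-1}\ee^{\real(\lapVar_j)t}\Big|\frac{\partial\lapVar}{\partial\intvar}(\intvar_j)\Big|\,\big\|\fC(\prmtr)\big(\hat\stx(\lapVar_j,\prmtr)-\fV_j\hat\stx_{\rr}(\lapVar_j,\prmtr)\big)\big\|.
\end{equation*}
For $t\in[T,\Lambda T]$ we have $\ee^{\real(\lapVar_j)t}\le \ee^{\real(\lapVar_j)t_j}$, with $t_j=\Lambda T$ when $\real(\lapVar_j)\ge0$ and $t_j=T$ otherwise. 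This bound is uniform in $t$, so it produces exactly the weights $w_j$ in \eqref{eqn:weights:CIM}.

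Next I decompose the error at a fixed node by linearity. Writing $\fH_j\vcentcolon=\lapVar_j\fE(\prmtr)-\fA(\prmtr)$ and using \eqref{eqn:stx:par:CIM} and \eqref{eqn:rstate:par:CIM}, the primal residual \eqref{eqn:red:par:CIM} splits as $\res(\lapVar_j,\prmtr)=\res_{\inSolSub}(\lapVar_j,\prmtr)\inSolSub+\res_{\inp}(\lapVar_j,\prmtr)\hat\inp(\lapVar_j)$, with $\res_{\inSolSub}$ and $\res_{\inp}$ as in \eqref{eqn:res:ini:par:CIM}--\eqref{eqn:res:inp:par:CIM}. The state error is then $\fV_j\hat\stx_{\rr}-\hat\stx=\fH_j^{-1}\res$, and the output error at the node is $\fC(\prmtr)\fH_j^{-1}\res$.

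The key step is to extract the product of the dual and primal residuals. Using \eqref{eqn:dual:res}, I write
\begin{equation*}
\fC(\prmtr)=\fH_j^{*}\text{-adjoint form: }\quad \fC(\prmtr)=\hat\fp_{\rr}^{*}\fW_j^{*}\fH_j-\res_{\hat\fp}^{*},
\end{equation*}
which gives
\begin{equation*}
\fC(\prmtr)\fH_j^{-1}\res=\hat\fp_{\rr}^{*}\fW_j^{*}\res-\res_{\hat\fp}^{*}\fH_j^{-1}\res .
\end{equation*}
The Petrov--Galerkin condition $\fW_j^{*}\res=\zeroVec$ kills the first term. Hence
\begin{equation*}
\|\fC(\prmtr)\fH_j^{-1}\res\|\le\|\res_{\hat\fp}\|\,\|\fH_j^{-1}\|\,\big(\|\res_{\inSolSub}\|\|\inSolSub\|+\|\res_{\inp}\|\|\hat\inp(\lapVar_j)\|\big).
\end{equation*}
The last factor is at most $(\|\res_{\inSolSub}\|+\|\res_{\inp}\|)\big(\|\inSolSub\|+\max_j\|\hat\inp(\lapVar_j)\|\big)$. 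Dividing by the normalization in \eqref{eqn:err:red:par:CIM}, summing over $j$, and taking the maximum over $t$ yields $\Delta(\prmtr)$.

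I expect the main obstacle to be bookkeeping rather than anything deep. First, the residual $\res_{\inSolSub}$ must be formed with $\inMat(\prmtr)=\fE(\prmtr)\inMat$, so that the reduced right-hand side $\fX_{\stateDimRed,j}\inSolSub$ matches $\fW_j^{*}\inMat(\prmtr)\inSolSub$. Second, I must check that $\fW_j^{*}\res=\zeroVec$ holds for every $\inSolSub$ and $\inp$ simultaneously, which follows from the splitting. Finally, the invertibility of $\fH_j$ and of the reduced pencil $\lapVar_j\fE_{\rr,j}-\fA_{\rr,j}$ has to be assumed; this is implicit in the statement.
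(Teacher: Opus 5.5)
Your proof is correct and follows essentially the same route as the paper: your substitution $\fC(\prmtr)=\hat\fp_{\rr}^{*}\fW_j^{*}(\lapVar_j\fE(\prmtr)-\fA(\prmtr))-\res_{\hat\fp}^{*}$ is algebraically the same as the paper's step of adding and subtracting $\fW_j\hat\fp_{\rr}$ to the adjoint solution $\hat\fp$. After that, Petrov--Galerkin orthogonality $\fW_j^{*}\res=\zeroVec$ removes one term, and the residual splitting together with the normalization yields $\Delta(\prmtr)$ exactly as in the paper; the only blemish is the garbled display line just before that identity, which should simply be replaced by the identity itself.
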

\begin{proof}
    Let us start by bounding the simpler output error. From definitions \eqref{eqn:out:par:CIM}, \eqref{eqn:out:par:CIM:red}, \eqref{eqn:stx:par:CIM}, \eqref{eqn:red:par:CIM}, \eqref{eqn:weights:CIM} and the adjoint system \eqref{eqn:adj:par:CIM} we have
    \begin{align*}
    \begin{aligned}
           \max_{t\in[T,\Lambda T]} \|\out_N(t,\prmtr)-\out_{\rr,N}(t,\prmtr)\|\;\le&\;\sum_{j=1}^{N-1}\frac{c}{N}\ee^{\real(\lapVar_j) t_j}\left|\frac{\partial\lapVar}{\partial \intvar}(\intvar_j)\right|\left\|\fC(\prmtr)\left(\hat\stx(\lapVar_j,\prmtr)-\fV_j\hat\stx_{\mathrm{r}}(\lapVar_j,\prmtr)\right)\right\|\\
           =&\;\sum_{j=1}^{N-1}w_j\left\|\fC(\prmtr)\left(\lapVar_j\fE(\prmtr)-\fA(\prmtr)\right)^{-1}\res(\lapVar_j,\prmtr)\right\|\\
           =&\;\sum_{j=1}^{N-1}w_j\left\|\hat \fp(\lapVar_j,\prmtr)^{*}\res(\lapVar_j,\prmtr)\right\|.
    \end{aligned}
    \end{align*}
Add subtract to $\hat \fp(\lapVar_j,\prmtr)$ its reduced counterpart $\fW_j\hat \fp_{\rr}(\lapVar_j,\prmtr)$ to write
\begin{align*}
    \begin{aligned}
      \sum_{j=1}^{N-1}w_j\left\|\hat \fp(\lapVar_j,\prmtr)^{*}\res(\lapVar_j,\prmtr)\right\|\;\le&\;\sum_{j=1}^{N-1}w_j\left\|\hat \fp_{\rr}(\lapVar_j,\prmtr)^{*}\fW_j^{*}\res(\lapVar_j,\prmtr)\right\|\\
      +&\;\sum_{j=1}^{N-1}w_j\left\|\left(\hat \fp(\lapVar_j,\prmtr)-\fW_j\hat \fp_{\rr}(\lapVar_j,\prmtr)\right)^{*}\res(\lapVar_j,\prmtr)\right\|\\
      =&\;\sum_{j=1}^{N-1}w_j\|\res_{\hat \fp}(\lapVar_j,\prmtr)^{*}\left(\lapVar_j\fE(\prmtr)-\fA(\prmtr)\right)^{-*} \res(\lapVar_j,\prmtr)\|\\
      \le&\;\sum_{j=1}^{N-1}w_j\left\|\left(\lapVar_j\fE(\prmtr)-\fA(\prmtr)\right)^{-1}\right\|\|\res_{\hat \fp}(\lapVar_j,\prmtr)\| \|\res(\lapVar_j,\prmtr)\|,
    \end{aligned}
\end{align*}
where we used the Petrov-Galerkin orthogonality condition $\fW_j^{*}\res(\lapVar_j,\prmtr)=\zeroVec$ and the dual residual definition \eqref{eqn:dual:res}. Concerning $\|\res(\lapVar_j,\prmtr)\|$, using \eqref{eqn:rstate:par:CIM} and the triangular inequality, we get
\begin{align*}
    \begin{aligned}
        \|\res(\lapVar_j,\prmtr)\|\;=&\;\left\|\left(\lapVar_j\fE(\prmtr)-\fA(\prmtr)\right)\fV_j\hat \stx_{\mathrm{r}}(\lapVar_j,\prmtr)-\inMat(\prmtr)\inSolSub-\fB(\prmtr)\hat \inp(\lapVar_j)\right\|\\
        \le&\;\left\|\left(\lapVar_j\fE(\prmtr)-\fA(\prmtr)\right)\fV_j\left(\lapVar_j\fE_{\rr,j}(\prmtr)-\fA_{\rr,j}(\prmtr)\right)^{-1}\fW^{*}_{j}\inMat(\prmtr)-\inMat(\prmtr)\right\|\|\inSolSub\|\\
        +&\;\left\|\left(\lapVar_j\fE(\prmtr)-\fA(\prmtr)\right)\fV_j\left(\lapVar_j\fE_{\rr,j}(\prmtr)-\fA_{\rr,j}(\prmtr)\right)^{-1}\fW^{*}_{j}\fB(\prmtr)-\fB(\prmtr)\right\|\|\hat \inp(\lapVar_j)\|.
    \end{aligned}
\end{align*}
Then, dividing the residual norm by the denominator of \eqref{eqn:err:red:par:CIM} and using the definitions \eqref{eqn:res:ini:par:CIM}-\eqref{eqn:res:inp:par:CIM}, we end up with
\begin{align*}
    \begin{aligned}
        \frac{\|\res(\lapVar_j,\prmtr)\|}{\|\inSolSub\|+\max_{j=1,\ldots,N-1}\|\hat\inp(\lapVar_j)\|}\;\le\; \|\res_{\inSolSub}(\lapVar_j,\prmtr)\|+ \|\res_{\inp}(\lapVar_j,\prmtr)\|,
    \end{aligned}
\end{align*}
which concludes the proof after suitable substitutions in the derived inequalities.
\end{proof}
\begin{remark}
Similar techniques for estimating errors in parametric $\LCS$ have been already developed in, e.g., \cite{FenAB17,ZhangFB15,FengB21}. These resulting error bounds have then been used in a greedy procedure to construct effective \ROMs as we do here. Our primary contribution in this context is to integrate this approach with the \CIM formulation to derive new error bounds specific to the needs of the \CIM formulation and \CIM approximation~\eqref{eqn:stx:par:CIM}. This integration enables a direct correlation between frequency domain residuals and output errors within the time interval of interest. Moreover, note that $\Delta(\prmtr)$ does not depend on the initial solution $\inSolSub$ or the input function $\inp$. Consequently, when $\Delta(\tilde \prmtr)\le\varepsilon$ for a particular $\tilde \prmtr$, the reduction error \eqref{eqn:err:red:par:CIM}, evaluated at $\tilde\prmtr$, is smaller than $\varepsilon$ for all $\inSolSub\in\R^{r_{\calF}}$ and $\inp\in\calU$. This property is crucial for the effectiveness and accuracy of the \CIM-\MOR. For the \emph{non-parametric \LCS case}, a bound for the $\mathcal{L}_2$ norm of the output error for all admissible initial conditions can be found in~\cite{BeaGM17,HeiRA11,SchV23}.
\end{remark}

Based on the upper bound established in~\Cref{lemma:err:est}, the projection matrices $\fV_j$ and $\fW_j$, for the indices $j=1,\ldots,N$, are generated by \Cref{alg:1}. It should be noted that this methodology represents a non-linear \MOR strategy (see \cite{HesPU26} for a recent survey on the topic) since the projection matrices are piecewise constant within the frequency domain. This characteristic comes from the use of a singular quadrature rule in all parameters, initial solutions, and input functions. This aspect is critically significant for two primary reasons: it considerably diminishes the dimensionality of the linear systems that must be solved for the \ROM evaluation (see also \cite[Sec.~4]{GugM23}), and it facilitates the implementation of parallel processing during the construction of $\fV_j$ and $\fW_j$, given their independence from projection matrices at other quadrature points. We also point out that a global variant of the \CIM-\MOR can be defined where one can construct projection matrices $\fV$ and $\fW$ constant across all quadrature points, as demonstrated in \cite[Alg. $3.1$]{GugM23}. A comparison of the local and global \MOR in the context of \CIM is detailed in \cite[Sec.~4]{GugM23}, with findings that are fairly intuitive: the local approach results in significantly more precise \ROMs for a specified \ROM size due to the localized nature of the basis, contributing to quicker offline phases since the greedy algorithm achieves its exit tolerance more rapidly as a result of enhanced accuracy. However, the disadvantage of this technique lies in the necessity of maintaining distinct reduced matrices for each quadrature point, leading to storage costs that are roughly $\calO(N)$ higher than its global counterpart.

\begin{algorithm}[t]
       \caption{The \CIM-\MOR for parametric \LCS: offline phase}\label{alg:1}
		\hspace*{\algorithmicindent} \textbf{Input:} a compact domain $\prmtrSet\subseteq\R^{\prmtrDim}$, the map $\lapVar: \intvar\rightarrow \lapVar(\intvar)$ for the profile $\Gamma$, the number of quadrature points $N$, target global accuracy $\varepsilon>0$, local accuracy $\varepsilon_j=\varepsilon/(N-1)$, the matrices in \eqref{eqn:CS:par:mat}, and $\inMat$.
        
		\hspace*{\algorithmicindent} \textbf{Output:} The projection spaces $\fV_{j}$ and $\fW_{j}$, for $j=1,\ldots,N-1$, such that $\max_{\prmtr\in\prmtrSet}\Delta(\prmtr)\le \varepsilon$.
        
	\begin{algorithmic}[1]
			\FOR{$j=1:N-1$}
            \STATE Set $J \leftarrow 1$, choose an initial point $\prmtr_1 \in \prmtrSet$. 
	\STATE
    Compute $\fV_j(J)\leftarrow\mathrm{orth}\left(\begin{bmatrix}\left(\lapVar_j\fE(\prmtr_1)-\fA(\prmtr_1)\right)^{-1}\inMat(\prmtr_1) &\left(\lapVar_j\fE(\prmtr_1)-\fA(\prmtr_1)\right)^{-1}\fB(\prmtr_1)\end{bmatrix}\right)$.\label{alg:1:line3}
    \STATE
    Compute $\fW_j(J)\leftarrow\mathrm{orth}\left(\begin{bmatrix}
        \left(\lapVar_j\fE(\prmtr_1)-\fA(\prmtr_1)\right)^{-*}\fC(\prmtr_1)^{*}\end{bmatrix}\right)$.\label{alg:1:line4}
        
            \STATE $\Delta_j(\prmtr)\leftarrow w_j\|(\lapVar_j\fE(\prmtr)-\fA(\prmtr))^{-1}\|\|\res_{\hat \fp}(\lapVar_j,\prmtr)\| \left(\|\res_{\inSolSub}(\lapVar_j,\prmtr)\|+ \|\res_{\inp}(\lapVar_j,\prmtr)\|\right)$.
			\WHILE{$\max_{\prmtr\in\prmtrSet}\Delta_j(\prmtr)>\varepsilon_j$}
			\STATE Choose $\prmtr_{J+1} \leftarrow \arg \max_{\prmtr \in \prmtrSet   } \Delta_j(\prmtr) $. 
            \STATE  $\fV_j(J+1)\leftarrow\mathrm{orth}\left(\begin{bmatrix}\fV_j(J) &\left(\lapVar_j\fE(\prmtr_{J+1})-\fA(\prmtr_{J+1})\right)^{-1}\begin{bmatrix}\inMat(\prmtr_{J+1}) &\fB(\prmtr_{J+1}) \end{bmatrix}\end{bmatrix}\right)$.\label{alg:1:line8}
            \STATE  $\fW_j(J+1)\leftarrow\mathrm{orth}\left(\begin{bmatrix}\fW_j(J) &\left(\lapVar_j\fE(\prmtr_{J+1})-\fA(\prmtr_{J+1})\right)^{-*}\fC(\prmtr_{J+1})^{*}\end{bmatrix}\right)$.\label{alg:1:line9}
            \STATE Set $J\leftarrow J+1$
			\ENDWHILE
			\ENDFOR
		\end{algorithmic}
	\end{algorithm}
  
    \begin{remark}\label{rmk:dim:Alg1}
        Typically, the number of vectors incorporated in the subspaces $\fV_j$ and $\fW_j$ may vary with each iteration. To guarantee the invertibility of $\lapVar_j\fE_{\rr,j}(\prmtr)-\fA_{\rr,j}(\prmtr)$, it is necessary that $\fV_j$ and $\fW_j$ possess equal dimensions. To achieve this, in each iteration $J$, we introduce randomly generated vectors into the columns of the smaller subspace, selected between the column spaces of $\fV_j(J+1)$ and $\fW_j(J+1)$.
    \end{remark}

For the reader who is familiar with frequency-domain interpolatory methods for parametric \MOR~\cite{BauBBG11,BenGW15,AntBG20}, the interpolatory structure of the subspaces $\fV_j$ and  $\fW_j$ resulting from~\Cref{alg:1} is clear. Even though producing interpolatory \ROMs has not been the main goal here, the output approximation $\out_N(t,\prmtr)$ of the \CIM formulation  in~\eqref{eqn:out:par:CIM} via the inverse Laplace transform together with the error estimate in~\Cref{lemma:err:est} and the resulting greedy procedure naturally leads to interpolatory subspaces via frequency domain sampling. 

Due to the inhomogeneous initial condition, as highlighted in~\eqref{eqn:Bro:int2} there are two rational functions in play here as opposed to a single transfer function to interpolate. One may still reformulate the resulting interpolatory conditions as an interpolation of a single rational function by expanding the input matrix $\fB$ with the initial condition subspace $\fE(\prmtr)\fF$ and then using the results of~\cite{BauBBG11} on this augmented system to prove the resulting interpolation conditions in the next result. However, since the \CIM~formulation we consider here focuses on the output in a specific time interval only, we instead present the result as interpolation of the output $\out_N(t,\prmtr)$ in the frequency domain for any suitable initial condition.  Towards this goal, let us denote by $\calJ_j\vcentcolon=\cup_{J=1}^{J_{j,\max}}\prmtr_J$ the set of parameters chosen by \Cref{alg:1} for the generation of subspaces at the quadrature point $\lapVar_j$. The subsequent result establishes that $\hat \out_{\rr,N}(\lapVar_j,\prmtr)$ is an Hermite interpolant to $\out_{N}(\lapVar_j,\prmtr)$ for every $\prmtr\in\calJ_j$. In other words, the Laplace transform of the output $\out_{N}(\lapVar_j,\prmtr)$ at the quadrature point $\lapVar_j$ is Hermite-interpolated by $\hat \out_{\rr,N}(\lapVar_j,\prmtr)$ for all $\prmtr\in\calJ_j$ as anticipated by the direct connection to the frequency-domain interpolatory projection methods.

\begin{lemma}[Hermite interpolation property of the \CIM-\MOR]\label{lem:int:cond}
Consider the Laplace transform of the \FOM output at the quadrature point $\lapVar_j$, i.e., 
\begin{equation}\label{eqn:out:FOM:lap}
     \hat \out_{N}(\lapVar_j,\prmtr)\;=\;\fD(\prmtr)\hat \inp(\lapVar_j)+\fC(\prmtr)\hat\stx(\lapVar_j,\prmtr),
\end{equation}
and the correspending \ROM version
\begin{equation}\label{eqn:out:ROM:lap2}
     \hat \out_{\rr,N}(\lapVar_j,\prmtr)\;=\;\fD(\prmtr)\hat \inp(\lapVar_j)+\fC_{\rr,j}(\prmtr)\hat\stx_{\rr}(\lapVar_j,\prmtr),
\end{equation}
where the \ROM is constructed toward \Cref{alg:1} after selecting, for each $\lapVar_j$,
$J_{j,\max}$ parameters that form the set $\calJ_j=\cup_{J=1}^{J_{j,\max}}\prmtr_J$. We have that, for all $j\in\{1,\ldots,N-1\}$, it holds 
    \begin{align}
        \hat \out_{N}(\lapVar_j,\prmtr)\;=&\;\hat \out_{\rr,N}(\lapVar_j,\prmtr),\label{eqn:int:point}\\
       \nabla \hat \out_{N}(\lapVar_j,\prmtr)\;=&\;\nabla \hat \out_{\rr,N}(\lapVar_j,\prmtr),\label{eqn:int:deriv}
    \end{align}
 for all $\prmtr\in\calJ_j$.       
\end{lemma}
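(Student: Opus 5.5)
The plan is to fix $j$ and $\prmtr=\prmtr_J\in\calJ_j$, abbreviate $\fK(\prmtr)\vcentcolon=\lapVar_j\fE(\prmtr)-\fA(\prmtr)$ and $\fK_{\rr}(\prmtr)\vcentcolon=\fW_j^{*}\fK(\prmtr)\fV_j$, and reduce both identities to the classical two-sided Petrov--Galerkin interpolation argument of \cite{BauBBG11}. I read the gradient in \eqref{eqn:int:deriv} as the gradient with respect to the parameter $\prmtr$, since $\lapVar_j$ is a fixed quadrature node. This gradient exists because the affine coefficient functions in \eqref{eqn:CS:par:mat} are analytic.

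Because of Lines \ref{alg:1:line3}--\ref{alg:1:line9} of \Cref{alg:1}, and because orthogonalization and the random padding of \Cref{rmk:dim:Alg1} only enlarge the spans, two inclusions hold. The columns of $\fK(\prmtr)^{-1}\begin{bmatrix}\inMat(\prmtr) & \fB(\prmtr)\end{bmatrix}$ lie in $\mathrm{range}(\fV_j)$, and the columns of $\fK(\prmtr)^{-*}\fC(\prmtr)^{*}$ lie in $\mathrm{range}(\fW_j)$. Define the oblique projector $\fP_{\fV}(\prmtr)\vcentcolon=\fV_j\fK_{\rr}(\prmtr)^{-1}\fW_j^{*}\fK(\prmtr)$, assuming $\fK_{\rr}(\prmtr)$ is invertible as in \Cref{rmk:dim:Alg1}. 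It acts as the identity on $\mathrm{range}(\fV_j)$. Similarly, $\fQ_{\fW}(\prmtr)\vcentcolon=\fK(\prmtr)\fV_j\fK_{\rr}(\prmtr)^{-1}\fW_j^{*}$ satisfies $\fW_j^{*}(\fI-\fQ_{\fW})=\zeroVec$, so $(\fI-\fQ_{\fW})^{*}$ annihilates $\mathrm{range}(\fW_j)$.

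For \eqref{eqn:int:point}, I write $\hat\stx=\fK^{-1}\fb$ with $\fb=\inMat(\prmtr)\inSolSub+\fB(\prmtr)\hat\inp(\lapVar_j)$. From \eqref{eqn:rstate:par:CIM}, $\fV_j\hat\stx_{\rr}=\fV_j\fK_{\rr}^{-1}\fW_j^{*}\fb=\fP_{\fV}\fK^{-1}\fb$. Since $\fK^{-1}\fb\in\mathrm{range}(\fV_j)$, this equals $\hat\stx$. Hence $\fC\hat\stx=\fC\fV_j\hat\stx_{\rr}$, and the $\fD$ terms agree trivially. This holds for every $\inSolSub$ and $\hat\inp$, and it uses only the $\fV_j$ inclusion.

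For \eqref{eqn:int:deriv}, I differentiate the error $\fC\fK^{-1}\fb-\fC\fV_j\fK_{\rr}^{-1}\fW_j^{*}\fb$ with respect to each component $\prmtr^{(i)}$, using $\partial(\fK^{-1})=-\fK^{-1}(\partial\fK)\fK^{-1}$. The aim is to rewrite the derivative as a sum of terms of the form
\[
\fC\fK^{-1}(\fI-\fQ_{\fW})\,\fM\,(\fI-\fP_{\fV})\fK^{-1}\fb
\]
together with terms of the form $\partial\fC\,(\fI-\fP_{\fV})\fK^{-1}\fb$ and $\fC\fK^{-1}(\fI-\fQ_{\fW})\,\partial\fb$. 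A cleaner route is the standard error identity
\[
\fC\fK^{-1}\fb-\fC\fV_j\fK_{\rr}^{-1}\fW_j^{*}\fb=\fC\fK^{-1}(\fI-\fQ_{\fW})\,\fK\,(\fI-\fP_{\fV})\fK^{-1}\fb,
\]
valid for all $\prmtr$ near $\prmtr_J$ where $\fK_{\rr}$ is invertible. It is checked by expanding and using $\fW_j^{*}\fK\fV_j=\fK_{\rr}$.

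I then differentiate this identity by the product rule. Every resulting term keeps at least one factor of either $(\fI-\fP_{\fV}(\prmtr))\fK(\prmtr)^{-1}\fb(\prmtr)$ or $\fC(\prmtr)\fK(\prmtr)^{-1}(\fI-\fQ_{\fW}(\prmtr))$. At $\prmtr=\prmtr_J$ the first factor vanishes by the $\fV_j$ inclusion. The second vanishes because $\fK^{-*}\fC^{*}=\fW_j\fY$ for some $\fY$, so $\fC\fK^{-1}(\fI-\fQ_{\fW})=\fY^{*}\fW_j^{*}(\fI-\fQ_{\fW})=\zeroVec$. This yields Hermite (first-derivative) matching in $\prmtr$, and also in $\lapVar$ if one wishes.

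The main obstacle is the bookkeeping that shows every term of the derivative carries one of the two vanishing factors. In particular, the terms where $\partial$ hits the middle factor $\fK$ or the projectors themselves must contain a vanishing factor on both sides. In that case $\partial\fP_{\fV}$ appears only sandwiched next to $(\fI-\fP_{\fV})$ or $(\fI-\fQ_{\fW})$. I will handle this by differentiating the factored error identity, where the two vanishing factors are exposed, rather than differentiating the projectors individually. Finally, I note that invertibility of $\fK_{\rr}$ near $\prmtr_J$ is needed and assumed.
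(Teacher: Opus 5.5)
Your proof is correct, and for the gradient condition it takes a genuinely different route from the paper.

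The paper obtains \eqref{eqn:int:point} from a C\'ea-type quasi-optimality bound whose best-approximation term vanishes. It obtains \eqref{eqn:int:deriv} by expanding $\partial_{\mu_i}\hat\out_{\rr,N}$ into three product-rule terms $\fS_1,\fS_2,\fS_3$ and matching each with its full-order counterpart. The $\partial\fC$ term is matched via $\hat\stx=\fV_j\hat\stx_{\rr}$, the $\partial(\fE\inMat\inSolSub+\fB\hat\inp)$ term via $\hat\fp=\fW_j\hat\fp_{\rr}$, and the $\partial\fK$ term via both.

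You obtain \eqref{eqn:int:point} purely algebraically, from the exactness of the oblique projector $\fV_j\fK_{\rr}^{-1}\fW_j^{*}\fK$ on $\range(\fV_j)$. You obtain \eqref{eqn:int:deriv} from the factorization of the error as (dual error)$\,\fK\,$(primal error). This factorization is exactly the identity $\hat\out_N-\hat\out_{\rr,N}=-(\hat\fp-\fW_j\hat\fp_{\rr})^{*}\res$, which is already implicit in the proof of \Cref{lemma:err:est}. Your argument therefore ties the interpolation property directly to the error estimator: the error has a double zero at every selected $\prmtr_J$.

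The bookkeeping you single out as the main obstacle is actually immediate. Write the error as $\Phi(\prmtr)\fK(\prmtr)\Psi(\prmtr)$ with $\Phi(\prmtr_J)=\zeroVec$ and $\Psi(\prmtr_J)=\zeroVec$. Each of the three product-rule terms then contains an undifferentiated $\Phi$ or $\Psi$, so derivatives of the projectors never need to be inspected.

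What your route buys: first-order matching in every direction at once (also in $\lapVar$, with $\fV_j,\fW_j$ frozen), and no need for an explicit formula for $\partial_{\mu_i}(\fK_{\rr}^{-1})$. The paper's \eqref{eqn:der:mat:inv} in fact omits the minus sign; this is harmless there only because the same expression is used on the full-order side. What the paper's route buys: it makes explicit which inclusion controls which part of the gradient. Both arguments rely on invertibility of $\fK_{\rr}$ in a neighbourhood of $\prmtr_J$, which you rightly state explicitly.
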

\begin{proof}
By \eqref{eqn:out:FOM:lap}, \eqref{eqn:out:ROM:lap2}, \eqref{eqn:rstate:par:CIM}, and \eqref{eqn:out:par:CIM:red} we have that, for all $\prmtr\in\calJ_j$, it holds
\begin{equation*}
    \|\hat \out_{N}(\lapVar_j,\prmtr)-\hat \out_{\rr,N}(\lapVar_j,\prmtr)\|\le\|\fC(\prmtr)\|\|\hat\stx(\lapVar_j,\prmtr)-\fV_j\hat\stx_{\rr}(\lapVar_j,\prmtr)\|.
\end{equation*}
 Using the extension of Cea's lemma for Petrov-Galerkin projections and the equivalence of norms in finite dimension, we get
  \begin{equation}\label{eqn:cea:lemma}
      \|\hat\stx(\lapVar_j,\prmtr)-\fV_j\hat\stx_{\rr}(\lapVar_j,\prmtr)\|\;\le\;p(\lapVar_j,\prmtr)\inf_{\fw\in\fV_j}\|\hat\stx(\lapVar_j,\prmtr)-\fw\|\;=\;0
  \end{equation}
    for $0<p(\lapVar_j,\prmtr)<\infty$ for all $\lapVar_j$ with $j=1,\ldots,N-1$ and $\prmtr\in\prmtrSet$ due to the first assumption discussed in \Cref{sec3}. The equality with zero arises from recognizing that, according to \eqref{eqn:stx:par:CIM} and given that $\stx_0=\inMat\inSolSub$, we have:
\begin{equation*}
    \hat\stx(\lapVar_j,\prmtr)\in \range\left(\left(\lapVar(\intvar)\fE(\prmtr)-\fA(\prmtr)\right)^{-1}\begin{bmatrix}
        \inMat(\prmtr)&\fB(\prmtr)
    \end{bmatrix}\right).
\end{equation*}
Additionally, by the construction of $\fV_j$, specifically in lines \ref{alg:1:line3} and \ref{alg:1:line8}, it is evident that $\hat\stx(\lapVar_j,\prmtr)\in \fV_j$ for all $\prmtr\in\calJ_j$. Thus, we deduce that \eqref{eqn:cea:lemma} and, consequently, \eqref{eqn:int:fun} are valid.

We prove \eqref{eqn:int:deriv} by showing that the partial derivative with respect to the parameter component $\mu_i$ is identical for all $i=1,\ldots,\prmtrDim$ and all $\prmtr\in\calJ_j$. For simplicity, we consider $\fD(\prmtr)=\zeroVec$ and define $\fM_j(\prmtr)=\lapVar_j\fE(\prmtr)-\fA(\prmtr)$ and by $\fM_{\rr,j}(\prmtr)$ its reduced counterpart according to the \CIM-\MOR approach. We thus have
\begin{align*}
    \begin{aligned}
        \frac{\partial}{\partial \mu_i}\left( \hat \out_{\rr,N}(\lapVar_j,\prmtr)\right)\;=&\;\underbrace{\frac{\partial \fC_{\rr,j}(\prmtr)}{\partial \mu_i}(\fM_{\rr,j}(\prmtr))^{-1}\left(\fX_{\stateDimRed,j}(\prmtr)\inSolSub+\fB_{\rr,j}(\prmtr)\hat\inp(\lapVar_j)\right)}_{=\vcentcolon\;\fS_1(\lapVar_j,\prmtr)}\\
        +&\;\underbrace{\fC_{\rr,j}(\prmtr)\frac{\partial }{\partial \mu_i}\left((\fM_{\rr,j}(\prmtr))^{-1}\right)\left(\fX_{\stateDimRed,j}(\prmtr)\inSolSub+\fB_{\rr,j}(\prmtr)\hat\inp(\lapVar_j)\right)}_{=\vcentcolon\;\fS_2(\lapVar_j,\prmtr)}\\
        +&\;\underbrace{\fC_{\rr,j}(\prmtr)(\fM_{\rr,j}(\prmtr))^{-1}\frac{\partial }{\partial \mu_i}\left(\fX_{\stateDimRed,j}(\prmtr)\inSolSub+\fB_{\rr,j}(\prmtr)\hat\inp(\lapVar_j)\right)}_{=\vcentcolon\;\fS_3(\lapVar_j,\prmtr)}.
    \end{aligned}
\end{align*}
For $\boldsymbol{\mathrm{A}}_j(\prmtr)$, by \eqref{eqn:rstate:par:CIM} and using the relation \eqref{eqn:cea:lemma} that holds for all $\prmtr\in\calJ_j$, we can write
\begin{align*}
    \begin{aligned}
 \fS_1(\lapVar_j,\prmtr)\;=&\;\frac{\partial \fC(\prmtr)}{\partial \mu_i}\fV_j\hat{\stx}_{\rr}(\lapVar_j,\prmtr)\;=\;\frac{\partial \fC(\prmtr)}{\partial \mu_i}\hat{\stx}(\lapVar_j,\prmtr),
    \end{aligned}
\end{align*}
where we also employed the affine parameter dependence of $\fC(\prmtr)$ that allows the relation
\begin{equation*}
    \frac{\partial \fC_{\rr,j}(\prmtr)}{\partial \mu_i}\;=\;\frac{\partial \fC(\prmtr)}{\partial \mu_i}\fV_j.
\end{equation*}
Analogously, for $\fS_3(\lapVar_j,\prmtr)$ by \eqref{eqn:CIM:red:adj} it holds for all $\prmtr\in\calJ_j$ that
\begin{align*}
    \begin{aligned}
\fS_3(\lapVar_j,\prmtr)=&\hat\fp_{\rr}(\lapVar_j,\prmtr)^{*}\fW_{j}^{*}\frac{\partial}{\partial \mu_i}\left(\inMat(\prmtr)\inSolSub+\fB(\prmtr)\hat\inp(\lapVar_j)\right)=\hat\fp(\lapVar_j,\prmtr)^{*}\frac{\partial}{\partial \mu_i}\left(\inMat(\prmtr)\inSolSub+\fB(\prmtr)\hat\inp(\lapVar_j)\right),
    \end{aligned}
\end{align*}
where the equality $\hat\fp(\lapVar_j,\prmtr)=\fW_j\hat\fp_{\rr}(\lapVar_j,\prmtr)$ for every $\prmtr\in\calJ_j$ is derived analogously to the proof for $\hat{\stx}(\lapVar_j,\prmtr)$ and $\fV_j\hat{\stx}_{\rr}(\lapVar_j,\prmtr)$. This is achieved by noticing that $\hat\fp(\lapVar_j,\prmtr)$ is in the range of $\fM_j(\prmtr)^{-*}\fC(\prmtr)^{*}$, considering the construction process of $\fW_j$ described in lines \ref{alg:1:line4} and \ref{alg:1:line8}, and ultimately using Cea's lemma. Deriving with respect to $\mu_i$ the relation $\fM_{\rr,j}(\prmtr)\fM_{\rr,j}(\prmtr)^{-1}=\fI_{\stateDimRed_j}$, we get 
\begin{align}\label{eqn:der:mat:inv}
\begin{aligned}
    \frac{\partial }{\partial \mu_i}\left(\fM_{\rr,j}(\prmtr)^{-1}\right)\;=&\;(\fM_{\rr,j}(\prmtr))^{-1} \frac{\partial \fM_{\rr,j}(\prmtr)}{\partial \mu_i}(\fM_{\rr,j}(\prmtr))^{-1}\\
    =&\;(\fM_{\rr,j}(\prmtr))^{-1}\fW_j^{*} \frac{\partial \fM_{j}(\prmtr)}{\partial \mu_i}\fV_j(\fM_{\rr,j}(\prmtr))^{-1},
    \end{aligned}
\end{align}
which allows us to write, in the same way as done for $\fS_1(\lapVar_j,\prmtr)$ and $\fS_3(\lapVar_j,\prmtr)$, that
\begin{align*}
    \begin{aligned}
      \fS_2(\lapVar_j,\prmtr)\;=&\; \fC_{\rr,j}(\prmtr)(\fM_{\rr,j}(\prmtr))^{-1}\fW_j^{*} \frac{\partial \fM_{j}(\prmtr)}{\partial \mu_i}\fV_j(\fM_{\rr,j}(\prmtr))^{-1}\left(\fX_{\stateDimRed,j}(\prmtr)\inSolSub+\fB_{\rr,j}(\prmtr)\hat\inp(\lapVar_j)\right) \\
      =&\;\hat\fp_{\rr}(\lapVar_j,\prmtr)^{*}\fW_{j}^{*}\frac{\partial \fM_{j}(\prmtr)}{\partial \mu_i}\fV_j\hat\stx_{\rr}(\lapVar_j,\prmtr)\;=\;\hat\fp(\lapVar_j,\prmtr)^{*}\frac{\partial \fM_{j}(\prmtr)}{\partial \mu_i}\hat\stx(\lapVar_j,\prmtr);
    \end{aligned}
\end{align*}
for all $\prmtr\in\calJ_j$ due to the interpolation conditions for the reduced state and adjoint state. The derivation of \eqref{eqn:out:FOM:lap} with respect $\mu_i$, for $\fD(\prmtr)=\zeroVec$, $\prmtr\in\calJ_j$, and any $i\in\{1,\ldots,\prmtrDim\}$ gives
\begin{align*}
    \begin{aligned}
        \frac{\partial}{\partial \mu_i}\left( \hat \out_{N}(\lapVar_j,\prmtr)\right)\;=&\;{\frac{\partial \fC(\prmtr)}{\partial \mu_i}\fM_{j}(\prmtr)^{-1}\left(\inMat(\prmtr)\inSolSub+\fB(\prmtr)\hat\inp(\lapVar_j)\right)}\\
        +&\;{\fC(\prmtr)\frac{\partial }{\partial \mu_i}\left(\fM_{j}(\prmtr)^{-1}\right)\left(\inMat(\prmtr)\inSolSub+\fB(\prmtr)\hat\inp(\lapVar_j)\right)}\\
        +&\;{\fC(\prmtr)\fM_{j}(\prmtr)^{-1}\frac{\partial }{\partial \mu_i}\left(\inMat(\prmtr)\inSolSub+\fB(\prmtr)\hat\inp(\lapVar_j)\right)}\\
        =&\;\frac{\partial \fC(\prmtr)}{\partial \mu_i}\hat \stx(\lapVar_j,\prmtr)+\hat\fp(\lapVar_j,\prmtr)^{*}\frac{\partial}{\partial \mu_i}\left(\inMat(\prmtr)\inSolSub+\fB(\prmtr)\hat\inp(\lapVar_j)\right)\\
        +&\;\hat\fp(\lapVar_j,\prmtr)^{*}\frac{\partial \fM_{j}(\prmtr)}{\partial \mu_i}\hat\stx(\lapVar_j,\prmtr)\\
=\;&\fS_1(\lapVar_j,\prmtr)+\fS_3(\lapVar_j,\prmtr)+\fS_2(\lapVar_j,\prmtr)\;=\;\frac{\partial}{\partial \mu_i}\left( \hat \out_{\rr,N}(\lapVar_j,\prmtr)\right),
    \end{aligned}
\end{align*}
where we used \eqref{eqn:stx:par:CIM}, \eqref{eqn:adj:par:CIM}, and the fact that for the derivative of $\fM_{j}(\prmtr)^{-1}$ it holds an analogous relation to \eqref{eqn:der:mat:inv}. This concludes the proof.
\end{proof}
\begin{remark}
As discussed above, interpolation results for the frequency domain Hermite interpolation conditions of parametric \LCS already exists in \cite[Thm.~4.1-4.2]{BauBBG11}. The assumptions necessary in \cite{BauBBG11} for these conditions are indeed met within the \CIM-\MOR framework, thereby automatically validating \eqref{eqn:int:point}-\eqref{eqn:int:deriv}. However, we stress that our approach is intrinsically rooted in the \CIM methodology, which relies on an automatic selection of interpolation parameters via a greedy algorithm grounded in an error estimate for the \CIM approximation. This process inherently ensures the fulfillment of the Hermite interpolation conditions together with the error minimization~as achived in~\cite{FenAB17,ZhangFB15,FengB21} for different error criteria. 

Our formulation also accommodates non-zero initial conditions, and the statement and proof differ from those in \cite[Thm.~4.1--4.2]{BauBBG11}. Therefore, for the sake of clarity and completeness, we still included the proof here.
\end{remark}

\begin{remark}
    In \cite[Thm.~$4.3$]{BauBBG11}, conditions are provided for the interpolation of the Hessian of the transfer function with respect to the dependence on the parameters. Within our \CIM-\MOR framework, these interpolation conditions would also be applicable under identical assumptions. This approach could be used to maintain the same dimension for $\fV_j$ and $\fW_j$ throughout iterations $J$. Specifically, rather than adhering to \Cref{rmk:dim:Alg1}, which is based on the addition of randomly generated vectors, one could augment the subspaces with vectors that enforce directional interpolations of the Hessian.
\end{remark}

    \begin{algorithm}[t]
       \caption{The \CIM-\MOR for parametric \LCS: online phase}\label{alg:2}
		\hspace*{\algorithmicindent} \textbf{Input:} A triplet $(\prmtr,\inp, \inSolSub)$ with $\prmtr\in\prmtrSet$ and $\inp_0\in\calU$, $t\in[T,\Lambda T]$, the map $\lapVar: \intvar\rightarrow \lapVar(\intvar)$ for the profile $\Gamma$, the number of quadrature points $N$, pre-assembled matrices using $\fV_j$, $\fW_j$, $\inMat$, and the assumption \eqref{eqn:CS:par:mat}.
        
		\hspace*{\algorithmicindent} \textbf{Output:} The reduced output $\out_{\mathrm{r},N}(t,\prmtr)$.
        
	\begin{algorithmic}[1]
			\FOR{$j=1:N-1$}
            \STATE Assemble $\fE_{\stateDimRed,j}(\prmtr)$, $\fA_{\stateDimRed,j}(\prmtr)$, $\fB_{\stateDimRed,j}(\prmtr)$, $\fC_{\stateDimRed,j}(\prmtr)$, and $\fX_{\stateDimRed,j}(\prmtr)$ according  \eqref{eqn:proj:mat} and \eqref{eqn:out:par:CIM:red}.
            \STATE Compute and store $\hat\stx_{\mathrm{r}}(\lapVar_j,\prmtr)=(\lapVar_j\fE_{\rr,j}(\prmtr)-\fA_{\rr,j}(\prmtr))^{-1}\left(\fX_{\stateDimRed,j}(\prmtr)\inSolSub+\fB_{\rr,j}(\prmtr)\hat\inp(\lapVar_j)\right)$.
			\ENDFOR
            \STATE Evaluate $\out_{\mathrm{r},N}(t,\prmtr)$ via \eqref{eqn:out:par:CIM:red}.
		\end{algorithmic}
	\end{algorithm}

   \Cref{alg:2} summarizes the online phase of the \CIM-\MOR procedure. Given a parameter $\prmtr\in\prmtrSet$, an input $\inp\in\calU$, and an initial condition $\inSolSub$, the dominant computational cost associated with the evaluation of the reduced output is the solution of a small number of linear systems of dimension $\stateDimRed\ll\stateDim$. Moreover, the same considerations discussed in \Cref{sec2:comp-cost} apply in the case of a fixed parameter $\prmtr$. Specifically, for a given $\prmtr$, one may preassemble the reduced counterparts of the matrices in \eqref{eqn:small:size:mat} and subsequently evaluate the output for all $\inp\in\calU$ and $\stx_0\in\calF$ without the need to solve any additional linear systems.

\subsection{About the computational complexity of \texorpdfstring{$\Delta(\prmtr)$}{EMPTY}}\label{sec:3.3}
As stated above, an effective error estimator must be computable with efficiency for every $\prmtr\in\prmtrSet$. This implies that the total floating-point operations required for its assessment should not be dependent on $\stateDim$.
The parameter dependence of $\Delta (\prmtr )$ appears in both the residual norms and the norm of $\|(\lapVar_j\fE(\prmtr)-\fA(\prmtr))^{-1}\|$. The former can be calculated efficiently and with stability for each $\prmtr$, as described in \cite{BuhEOR14}. The latter involves determining the smallest singular value of $\lapVar_j\fE(\prmtr)-\fA(\prmtr)$, with its computational expense proportional to $\stateDim$. Over the past ten years, subspace approaches have been introduced to tackle this issue both accurately and efficiently. Initially, \cite{SirK16} enhanced the traditional \textit{successive constraint minimization} (\SCM) technique \cite{HuyRSP07} to estimate the smallest eigenvalue in parametric Hermitian matrices. In short, given a parametric matrix in affine form $\fA(\prmtr)=\sum_{j=1}^{Q_{\fA}}\alpha_j(\prmtr)\fA_j$, where each $\fA_i$ is Hermitian for $j=1,\ldots,Q_{\fA}$, the subspace method seeks to approximate the smallest eigenvalue of $\fA(\prmtr)$, denoted by $\lambda_{\min}(\fA(\prmtr))$. This is achieved by substituting $\lambda_{\min}(\fA(\prmtr))$ with the smallest eigenvalue of a possibly much smaller matrix $\fA_{\fV}(\prmtr)=\sum_{j=1}^{Q_{\fA}}\alpha_j(\prmtr)\fV^{*}\fA_j\fV$. Here, $\fV$ is a matrix that contains a limited number of orthonormal columns. Essentially, this technique represents a projection-based \MOR for the eigenvalue problem. More recently, \cite{ManMG24} provided a deep theoretical analysis of the approach by \cite{SirK16}, discussing how to extend it to a continuous parametric domain and presenting a novel subspace framework to approximate the smallest singular value. We employ this methodology to approximate $\|(\lapVar_j\fE(\prmtr)-\fA(\prmtr))^{-1}\|$ in this work. However, we remark that despite this 
subspace framework for singular values, it is shown to work very well, both in terms of efficency and accuracy (see \cite[Sec.~6.2]{ManMG24}), the method proposed does not always guarantee that the approximation error is below a pre-defined target accuracy; see \cite[Rmk.~8]{ManMG24}. The recent work~\cite{BalEG26} has developed the Keldysh decomposition for matrix-valued functions that depend analytically on a parameter and used contour integration techniques develop an algorithm for solving parametric nonlinear eigenvalue problems. Since~\cite{BalEG26} uses the frequency domain samples in a similar form which we already sample here, it might be another avenue to incorporate into this setting to reduce the computational cost.

To conclude, we emphasize that after constructing the \ROM using the \CIM-\MOR methodology, one can precompute the matrices, as outlined in \Cref{sec2:comp-cost}, for any parameter $\prmtr\in\prmtrSet$. This allows for rapid evaluations across all valid inputs and initial solutions.

\section{Numerical results}\label{sec4}
We demonstrate the accuracy and efficiency of the \CIM-\MOR technique applied to linear control systems using several benchmark examples. We then compare it with state-of-the-art methodologies for \MOR of control systems. \Cref{subsection:non-parametric} focuses on the non-parametric scenario, whereas \Cref{subsection:parametric} addresses parametric control systems. In every test example presented, leveraging the symmetry of the integrand function \eqref{eqn:int:fun} with respect to the real axis allows for reducing the utilized number of quadrature points by half. Consequently, when the notation $N$ refers to the total number of quadrature points appears in the plots of \Cref{subsection:non-parametric}, it should be understood that it has already been halved. In other words, $N$ here represents the total number of linear systems to be solved.

All calculations were performed with \matlab 2024b on a MacBook Pro with an Apple M2 Pro processor and 16GB of RAM. 

\vspace{0.2cm}
\noindent\fbox{%
	\parbox{0.98\textwidth}{%
		The code and data used to generate the subsequent results are accessible via
		\begin{center}
			\url{https://doi.org/10.5281/zenodo.21298216}
		\end{center}
		under MIT Common License.
	}%
}
\subsection{Non-parametric problems}\label{subsection:non-parametric}
\Cref{tab:T0} list the benchmark problems considered to test the \CIM-\LCS.

\begin{table}[h!]
\centering
\renewcommand{\arraystretch}{1.5}
\begin{tabular}{lrcc}
\toprule
Example&$\stateDim$ & $\inpDim$ & $\outDim$ \\
\midrule
International space station & $1412$ & $3$ & $3$  \\
Thermal block & $7488$& 1& 4 \\
Butterworth filter & 300 & 1 & 1 \\
\bottomrule
\end{tabular}
\caption{A list of examples with their dimensions ($\stateDim$), the number of inputs
($\inpDim$), and outputs ($\outDim$). These examples are taken from 	\url{  https://modelreduction.org/morwiki/Main_Page}.}\label{tab:T0}
\end{table}

\subsubsection{International space station model} 
We begin by examining the international space station model, specifically the one outlined in \cite{morGugAB01}, with $\fE=\fI_{\stateDim}$ and $\stateDim=1412$, and both the input and output dimensions are $\inpDim=3$ and $\outDim=3$, respectively. The feedthrough matrix $\fD$ is zero. We aim to approximate the solution for all $t\in[T,\Lambda T]$ with $T=50$, $\Lambda=2$, and the initial conditions in the span of the columns of the randomly generated matrix $\inMat=\orth(\randn(\stateDim,\stateDimRed_{\calF}))$, for $\stateDimRed_{\calF}=3$. After determining the curves $\Gamma_{left}$, $\Gamma_{right}$, and the integration profile $\Gamma$ following the procedure detailed in \Cref{subsection:2.1}, see \Cref{figISSc}, we test the \CIM-\LCS approach with the following input functions 
$\inp(t)$  and initial conditions $\stx_0 = \inMat \inSolSub$:
\begin{subequations}
    \begin{align}
        \inp(t)\;=&\;\left[\sin(t),t,\cos(t)\right]^{\T},\quad\quad\quad\quad\quad\quad\quad\quad\quad\inSolSub\;=\;\begin{bmatrix}
            1& 1&1
        \end{bmatrix}^\T,\label{eqn:ISS:inp1}\\
\inp(t)\;=&\;\left[t^2\ee^{-\tfrac{t}{50}},\sinh\left(\tfrac{t}{100}\right),\cosh\left(\tfrac{t}{50}\right)\right]^{\T},\quad\quad\inSolSub\;=\;\rand(\stateDimRed_{\calF},1).\label{eqn:ISS:inp2}
    \end{align}
\end{subequations}
\begin{figure}[t]
	\centering{
	\begin{subfigure}[t]{0.32\textwidth}
	    \input{img/ISS/Example_ISS_1.tex}
        \subcaption{Decay of $\calE(N)$ (see \eqref{eqn:err:def}) for different $\tol$; $\inp$ and $\inSolSub$ as in \eqref{eqn:ISS:inp1}.}\label{figISSa}
	\end{subfigure}
    \hfill
    \begin{subfigure}[t]{0.32\textwidth}
	    \input{img/ISS/Example_ISS_2.tex}
         \subcaption{Decay of $\calE(N)$ (see \eqref{eqn:err:def}) for different $\tol$; $\inp$ and $\inSolSub$ as in \eqref{eqn:ISS:inp2}.}\label{figISSb}
	\end{subfigure}
    \hfill
    \begin{subfigure}[t]{0.32\textwidth}
	    \input{img/ISS/Example_ISS_3.tex}
         \subcaption{The spectrum of $\fA$, $\Gamma_{left}$, $\Gamma_{right}$, and $\Gamma$ for $\tol=10^{-4}$.}\label{figISSc}
	\end{subfigure}
 }
	\caption{International space station benchmark for $T=50$, $\Lambda=2$, and $\stateDimRed_{\calF}=3$.}
	\label{fig1}%
\end{figure}%
Results are displayed in \Cref{figISSa} and \Cref{figISSb}. The \CIM-\LCS always achieves the desired accuracy $\tol$ for both the considered combinations of input functions and initial conditions; moreover, the number of quadrature nodes to reach the target accuracy, i.e., $\tilde N$, is successfully predicted. 

This example involves solving numerous linear systems of approximately $\calO(10^3)$, which may be considered extensive for a system with a state dimension of $1412$. It is crucial to highlight that the number of quadrature points utilized should be evaluated against the number of time steps that standard Runge-Kutta or multi-step methods would require for temporal integration at the specified accuracy $\tol$, rather than being compared to the system's dimension $\stateDim$. As demonstrated in \Cref{subsec:4.1 MOR comparison}, even though $N$ is in the thousands, the capacity to precompute the primary computational load only once considerably accelerates the online phase of the \CIM-\LCS, as opposed to any time-stepping integrator used on small systems obtained through a \MOR technique. The reason for the large value of $N$ in this example is linked to the spectral properties and the resolvent norm associated with this benchmark. As evident in \Cref{figISSc}, several eigenvalues possess a significantly large imaginary component, hindering the efficient transformation of the vertical line into the contour $\Gamma$. Nonetheless, advancements can be achieved in handling such issues. For instance, one can leverage the composition of quadrature rules \cite{GuoL25} for sectorial operators to decrease the number of nodes required or utilize the differentiability of the input function and initial solution, when accessible, to enhance the convergence rate of the quadrature rules; see \cite{HorG24}.

\subsubsection{Non-parametric thermal block} \label{sec:TB:ex}
We examine the non-parametric form of the semi-discretized heat transfer model detailed in \cite{morwiki_thermalblock}. Specifically, the model involves sparse matrices, resulting in a state-space dimension of $\stateDim=7488$, with an input dimension of $\inpDim=1$, and $\outDim=4$ outputs. It is important to note that, for this problem, the matrix $\fE$ is not equivalent to the identity matrix. We look for an approximation of $\out(T)$ for $T=100$. \Cref{figTBc} displays the contour map for $\tol=10^{-6}$ and some elements of the generalized spectrum of $(\fA,\fE)$ which we observed to lie entirely on the negative real axis. 
We pick $\inMat=\orth(\randn(\stateDim,\stateDimRed_{\calF}))$ with  $\stateDimRed_{\calF}=10$. Then,
The input functions $\inp(t)$ and the initial conditions $\stx_0 = \inMat \inSolSub$ considered for testing this benchmark are
\begin{subequations}\label{eqn:TB:initial:data}
    \begin{align}
\inp(t) \;=&\; \sin\left(\frac{t}{2}\right),   & \inSolSub \;=&\; \rand(\stateDimRed_{\calF},1),  \label{eqn:TB:inp1}\\
\inp(t) \;=&\; t,         & \inSolSub \;=&\; \begin{bmatrix}
    1&\cdots &1
\end{bmatrix}^{\T}. \label{eqn:TB:inp2}
    \end{align}
\end{subequations}
The results are shown in \Cref{figTBa} and \Cref{figTBb}.
\begin{figure}[t]
	\centering{
	\begin{subfigure}[t]{0.32\textwidth}
	    \input{img/TB/Example_TB_1.tex}
        \subcaption{Decay of $\calE(N)$ (see \eqref{eqn:err:def}) for different $\tol$; $\inp$ and $\inSolSub$ as in \eqref{eqn:TB:inp1}.} \label{figTBa}
	\end{subfigure}
    \hfill
    \begin{subfigure}[t]{0.32\textwidth}
	    \input{img/TB/Example_TB_2.tex}
        \subcaption{Decay of $\calE(N)$ (see \eqref{eqn:err:def}) for different $\tol$; $\inp$ and $\inSolSub$ as in \eqref{eqn:TB:inp2}.} \label{figTBb}
	\end{subfigure}
    \hfill
    \begin{subfigure}[t]{0.32\textwidth}
	    \input{img/TB/Example_TB_3.tex}
         \subcaption{The spectrum of $(\fA,
         \fE)$, $\Gamma_{left}$, $\Gamma_{right}$, and $\Gamma$ for $\tol=10^{-6}$.}
         \label{figTBc}
	\end{subfigure}
 }
	\caption{Thermal block benchmark for $T=100$, $\Lambda=1$, and $\stateDimRed_{\calF}=10$.}
	\label{fig2}%
\end{figure}%
The error linked to the estimated number of quadrature points $\tilde N$ consistently remains under the specified accuracy threshold $\tol$ in both cases. Furthermore, compared to the International Space Station benchmark, the number of quadrature points needed is approximately an order of magnitude smaller. This is largely attributed to the generalized spectrum of $(\fA,\fE)$ being confined entirely to the real axis, along with the resolvent norm's limitation in magnitude away from the generalized spectrum, thereby rendering the \CIM highly efficient.

\subsubsection{Butterworth filter} \label{subsec:BF}
The third benchmark test considered is the digital Butterworth filter via its state-space representation with $\stateDim=300$, $\inpDim=1$, $\outDim=1$, and $\fE=\fI_{\stateDim}$; see \cite{Lyons1996UnderstandingDS} for more details. We approximate the output $\out(t)$ for all $t\in[T,\Lambda T]$ with $T=1$ and $\Lambda=5$. The integration profile used for $\tol=10^{-8}$ is in \Cref{figISSc}. We test the \CIM-\LCS approach for the pairs of initial data
\begin{subequations}\label{eqn:BF:initial:data}
    \begin{align}
\inp(t) \;=&\; \cos(t),   & \stx_0 \;=&\; \begin{bmatrix}
    1&1&1&1 &1
\end{bmatrix}^{\T},  \label{eqn:BF:inp1}\\
\inp(t) \;=&\; \cosh{\left(\frac{t}{5}\right)},         & \stx_0 \;=&\; \randn(\stateDimRed_{\calF},1), \label{eqn:BF:inp2}
    \end{align}
\end{subequations}
\begin{figure}[t]
	\centering{
	\begin{subfigure}[t]{0.32\textwidth}
	    \input{img/BG/Example_BG_1.tex}
        \subcaption{Decay of $\calE(N)$ (see \eqref{eqn:err:def}) for different $\tol$; $\inp$ and $\inSolSub$ as in \eqref{eqn:BF:inp1}.}	\label{figBFa}
	\end{subfigure}
    \hfill
    \begin{subfigure}[t]{0.32\textwidth}
	    \input{img/BG/Example_BG_2.tex}
        \subcaption{Decay of $\calE(N)$ (see \eqref{eqn:err:def}) for different $\tol$; $\inp$ and $\inSolSub$ as in \eqref{eqn:BF:inp2}.}	\label{figBFb}
	\end{subfigure}
    \hfill
    \begin{subfigure}[t]{0.32\textwidth}
	    \input{img/BG/Example_BG_3.tex}
         \subcaption{$\Gamma$ used for $\tol=10^{-8}$ and spectrum of $\fA$.}
	\end{subfigure}	\label{figBFc}
 }
	\caption{Butterworth filter test problem for $T=1$, $\Lambda=5$, and $\stateDimRed_{\calF}=5$.}
	\label{figBF}%
\end{figure}%
being $\stateDimRed_{\calF}=5$ and $\inMat=\orth(\randn(\stateDim,\stateDimRed_{\calF}))$. The effectiveness of the methodology is demonstrated with \Cref{figBFa} and \Cref{figBFb}.

\subsubsection{Comparison with traditional \MOR for \LCS}\label{subsec:4.1 MOR comparison}
In this section, we compare the computational time associated with the \CIM-\LCS approach with that of the other two established projection-based reduction methods for control systems: balance truncation (\BT), specifically its time-limited version \cite{Kur18}, and the iterative rational Krylov algorithm (\IRKA), see \cite{gugercin_2008}. 

The computational expenses associated with the \CIM-\LCS are addressed in \Cref{sec2:comp-cost}. During the online phase, both \TLBT and \IRKA utilize time-stepping integration within the \ROM framework. In the offline phase, \TLBT necessitates computing the matrix exponential-vector product $\ee^{\fA T}\fB$ and $\ee^{\fA^{\T} T}\fC^{\T}$, followed by solving two Lyapunov equations. Conversely, \IRKA requires solving multiple linear systems of equations involving the matrix $\lapVar\fE-\fA$, until the reduced order model meets the $\calH_2$ optimality criteria. Adapting these projection-based techniques to general initial conditions is non-trivial, as their offline phase is dedicated to constructing projection spaces for one specific initial condition. A change in the initial solution demands revisiting parts of the offline phase to appropriately modify the projection spaces. For the matrix exponential-vector product, we used the function \expmv of \matlab, while to solve sparse Lyapunov equations, we employed \cite{SaaKB25}. For \IRKA, we used the function available in \cite{BreU21}.

\Cref{tab:T1} presents the computational times for two out of the three evaluated test examples with a target time of $T=100$, ensuring \eqref{eqn:err:def} remains below $10^{-5}$ for all the three methods considered. The offline stage of the \CIM-\LCS involved establishing the integration contour $\Gamma$ and assembling matrices represented by \eqref{eqn:small:size:mat}. The offline phase is primarily governed by the first component, which is evidently more resource-intensive compared to the offline phases of both \TLBT and \IRKA. Nevertheless, after $\Gamma$ has been established, the online stage of the \CIM-\LCS method is nearly an order of magnitude quicker compared to the \TLBT and \IRKA approaches, as it eliminates the need for costly time-stepping integrators. We emphasize that the primary factor contributing to the computational time involved in constructing $\Gamma$ is the evaluation of some of the eigenvalues of the matrix pair $(\fA,\fE)$ as well as the expression \eqref{eqn:new:pse:set} for specific $\lapVar\in\C$. Nonetheless, there are instances where the spectral characteristics or eigenvalue bounds of $(\fA,\fE)$ are known in advance, which can be utilized to significantly accelerate the construction process of $\Gamma$. For example, in the case of the thermal block, it could be used the fact that all the eigenvalues are known to be negative real numbers, being the matrix a suitable discretization of a self-adjoint operator. Consequently, we could bypass these calculations, and according to our experiments, this would considerably reduce the offline stage from $10$ to $3.5$ seconds.

Despite the overwhelming numerical evidence, neither \TLBT nor \IRKA theoretically guarantee that a \ROM originating from an asymptotically stable \FOM will maintain its asymptotic stability. For further discussion and computational solutions, see \cite[Sec. 4.3]{Kur18} and \cite[Chap. 5]{gugercin_2008}. The \CIM-\LCS approach, on the other hand, does not encounter this problem since it does not utilize projection or time-stepping strategies. 

\begin{table}[t]
\centering
\renewcommand{\arraystretch}{1.5}
\begin{tabular}{lccccccc}
\toprule
&\multicolumn{3}{c}{\CIM-\LCS} & \multicolumn{2}{c}{\TLBT} & \multicolumn{2}{c}{\IRKA} \\
\cmidrule(lr){2-4} \cmidrule(lr){5-6} \cmidrule(lr){7-8}
&\multicolumn{2}{c}{Offline} & {Online} & {Offline} & {Online} & {Offline} & {Online} \\
\cmidrule(lr){2-3} 
&Const. $\Gamma$ & Assemble &  & &  & & \\
\midrule
Int. Sp. Sta. &$4.6\cdot10^{-1}$ & $1.3\cdot10^{-1}$ &  $5.4\cdot10^{-3}$ & $9.1\cdot10^{-1}$ & $6.5\cdot10^{-2}$ & $2.7\cdot10^0$ & $5.1\cdot10^{-2}$ \\
Therm. B. &$1.0\cdot10^{1}$ & $2.4\cdot10^{-1}$ & $1.7\cdot10^{-3}$ & $1.2\cdot10^0$ & $2.4\cdot10^{-2}$ & $5.3\cdot10^{0}$ & $9.0\cdot10^{-3}$ \\
\bottomrule
\end{tabular}
\caption{Computational times compared. The final time of interest is $T=100$ and the error at the final time is smaller than $\tol=10^{-5}$ for all the three compared methdology. Size of the \ROM for $\TLBT$ and $\IRKA$ is $\stateDimRed=27$ for the international space station and $\stateDimRed=20$ for the thermal block.}\label{tab:T1}
\end{table}

\subsubsection{Speeding up the contour construction exploiting \ROMs}\label{subsec:4.1 CIM-MOR}

As indicated in \Cref{tab:T1}, the primary computational challenge of \CIM-\LCS lies in forming the integration profile $\Gamma$. To expedite this process, one can initially construct a \ROM using conventional \LCS techniques like \BT or \IRKA, and subsequently apply the \CIM to the \ROM. This step significantly reduces computational expenses, as demonstrated in \Cref{tab:T2}, where the time required to develop $\Gamma$ becomes minimal in comparison to the \ROM construction. We refer to this approach as the \ROM-\CIM-\LCS to remark that before applying the \CIM-\LCS we construct a \ROM employing one of the traditional methods.

Nevertheless, this approach presents two significant drawbacks. Firstly, the Petrov-Galerkin projection, employed to obtain the \ROM, results in a spectrum and resolvent norm magnitude that differ from the original problem, and these can potentially be less favourable. For example, with a symmetric positive definite matrix, the spectrum is real, and the resolvent norm magnitude is inversely related to the spectrum distance. However, when a Petrov-Galerkin projection is applied to such a matrix, eigenvalues may gain imaginary components, and the reduced operator could become non-normal, leading to a large resolvent norm even when distanced from the spectrum. The second limitation is that this method does not extend seamlessly to parametric scenarios, unlike the inherently adaptable \CIM-\LCS.

\begin{table}[t]
\centering
\renewcommand{\arraystretch}{1.5}
\begin{tabular}{lcccc}
\toprule
&\multicolumn{4}{c}{\ROM-\CIM-\LCS}  \\
\cmidrule(lr){2-5} 
&\multicolumn{3}{c}{Offline} & {Online}  \\
\cmidrule(lr){2-4} 
&Construction \ROM &Construction $\Gamma$ & Assemble &  \\
\midrule
Int. Sp. Sta. & $1.2\cdot 10^{-1}$ & $8.5\cdot 10^{-2}$ &  $2.6\cdot 10^{-3}$ & $5.6\cdot 10^{-4}$ \\
Therm. B. &  $6.7\cdot 10^{-1}$ &  $5.3\cdot 10^{-3}$ & $6.0\cdot 10^{-2}$ & $8.5\cdot 10^{-4}$  \\
\bottomrule
\end{tabular}
\caption{The \ROM-\CIM-\LCS method with initial \ROM constructed via \BT. Time of interst $T=10$.}\label{tab:T2}
\end{table}

\subsection{Parametric linear control systems} \label{subsection:parametric}
\Cref{tab:T0bis} lists the benchmark problems considered to test the \CIM-\LCS.

\begin{table}[h!]
\centering
\renewcommand{\arraystretch}{1.5}
\begin{tabular}{lrccc}
\toprule
Example&$\stateDim$ & $\inpDim$ & $\outDim$& $\prmtrDim$ \\
\midrule
Parametric thermal block & $7488$& 1& 4 &1\\
Silicon nitride membrane & 60020 &1&2&4\\
\bottomrule
\end{tabular}
\caption{A list of examples with their dimensions ($\stateDim$), the number of inputs
($\inpDim$), outputs ($\outDim$), and parameters ($\prmtrDim$). These examples are taken from \url{  https://modelreduction.org/morwiki/Main_Page}.}\label{tab:T0bis}
\end{table}

\subsubsection{The one-parameter thermal block}
This is the one parameter variant of the thermal block problem presented in \Cref{sec:TB:ex}, see \cite{morwiki_thermalblock}. The parameter dependnce is in $\fA$ and reads as
\begin{equation*}
    \fA(\mu)\; \vcentcolon=\; \fA_1+\mu \fA_2, 	\quad\quad \mu  \in \prmtrSet\;\vcentcolon= \;[10^{-6}, 10^2],
\end{equation*}
where $\fA_1, \fA_2 \in \R^{\stateDim\times \stateDim}$ with $\stateDim = 7488$, are sparse, non-Hermitian matrices. The remaining matrices are the same as those referred to in \Cref{sec:TB:ex}. We define an accuracy threshold of $\tol=10^{-7}$ and design the integration contour $\Gamma$ with respect to the parameter values $\mu=100$, $T=25$, and $\Lambda=2$, which consequently yields $N=57$ quadrature points. With this configuration of $\Gamma$ for $\mu=100$ and $t\in[T,\Lambda T]$, the initial hypothesis outlined at the start of \Cref{sec3} holds true, allowing for the application of the $\CIM-\MOR$. This is achieved by executing \Cref{alg:1} across a discretized parameter domain $\Xi\subseteq\prmtrSet$, which consists of $1000$ points distributed logarithmically, ensuring an even spread over the parameter domain $\prmtrSet$ that spans multiple orders of magnitude. We consider the initial condition to belong to a subspace $\calF$ of dimension $\stateDimRed_{\calF}=3$ where the basis for $\calF$ is generated by orthonormalizing $3$ vectors, where each entry is generated by a random normal distribution. Finally, we set $\varepsilon=10^{-8}$, so that the reduction error is negligible with respect to the error generated by the \CIM time integration.
\begin{figure}[t]
	\centering{
	\begin{subfigure}[t]{0.48\textwidth}
	    \input{img/ParTB/Example_ParTB_1}
        \subcaption{Reduction error $\calE_{\rr}(\prmtr,\inp,\inSolSub)$ (see \eqref{eqn:err:red:par:CIM}) over $\Xi$ for the initial data in \eqref{eqn:ParTB:ID}.} \label{figParTBa}
	\end{subfigure}
    \hfill
    \begin{subfigure}[t]{0.48\textwidth}
	    \input{img/ParTB/Example_ParTB_2}
        \subcaption{Decay of the local error estimate $\tilde \Delta_j(\prmtr)$ with respect to the greedy algorithm iterations $J$ and target exit tollerance $\tilde \varepsilon_j$; see \eqref{eqn:local:err:est:tol}.} \label{figParTBb}
	\end{subfigure}
 }
	\caption{The one parameter thermal block benchmark for $T=25$, $\Lambda=2$, and $\mu\in[10^{-6},10^2]$.}
	\label{fig1:ParTB}%
\end{figure}%

After termination of \Cref{alg:1}, 
we test the \ROM given by the non-linear \CIM-\MOR for the 
whole parameter domain, the following inputs $\inp(t)$ and initial conditions $\stx_0 = \inMat \inSolSub$: 
\begin{subequations}\label{eqn:ParTB:ID}
    \begin{align}
\inp(t) \;=&\; \exp(-t),   & \inSolSub \;=&\; \begin{bmatrix}
    1&1&1
\end{bmatrix}^\T,  \label{eqn:ParTB:inp1}\\
\inp(t) \;=&\; t,         & \inSolSub \;=&\; \rand(\stateDimRed_{\calF},1), \label{eqn:ParTB:inp2}\\
\inp(t) \;=&\; \cos{\left(\frac{t}{5}\right)},         & \inSolSub \;=&\; \randn(\stateDimRed_{\calF},1). \label{eqn:ParTB:inp3}
    \end{align}
\end{subequations}
\Cref{figParTBa} presents the reduction error, as defined by \eqref{eqn:err:red:par:CIM}, over the parameter set $\Xi$ for the initial conditions and inputs specified by \eqref{eqn:ParTB:ID}. As anticipated, the reduction error remains consistently below the desired precision threshold $\varepsilon=10^{-8}$ across all three scenarios. Additionally, despite the stringent accuracy requirements, the evaluation of the \ROM, specifically the calculation in \eqref{eqn:out:par:CIM:red}, was approximately $10$ times quicker than the assessment of the \FOM, represented by the evaluation of \eqref{eqn:out:par:CIM}. 

Let us define the local error estimate and target exit tolerance at the quadrature point $j$ as
\begin{equation}\label{eqn:local:err:est:tol}
    \tilde \Delta_j(\prmtr)\;\vcentcolon=\;\frac{\Delta(\prmtr)}{(N-1)w_j},\quad \quad\tilde\varepsilon_j\;\vcentcolon=\;\frac{\varepsilon}{(N-1)w_j};
\end{equation}
The behaviour of these terms for some values of $j$ with respect to the iteration number $J$ of \Cref{alg:1} are reported in \Cref{figParTBb}.
\begin{table}[t]
\centering
\renewcommand{\arraystretch}{1.3} 
\begin{tabular}{@{} l c c c @{}}
\toprule
     & \textbf{Max} & \textbf{Mean} & \textbf{Min} \\ 
\midrule
\textbf{$\stateDimRed_j$} & 120 & 100  & 76  \\ 
\textbf{$J_{j,\max}$} & 30 & 25 &  19 \\ 
\bottomrule
\end{tabular}
\caption{The one parameter thermal block benchmark for $T=25$, $\Lambda=2$, and $\mu\in[10^{-6},10^2]$. Maximum, mean, and minimum value for: the size of the dense linear system solved to evaluate the \ROM \eqref{eqn:rstate:par:CIM} and the number of parameters selected $J_{j,\max}$ at iteration $j$ of \Cref{alg:1}.}\label{Tab:5}
\end{table}
In \Cref{Tab:5}, we present a summary of the maximum, average, and minimum sizes of the \ROM, evaluated across the quadrature points $j=1,\ldots,N-1$. Additionally, we report the same values for the total number of parameters selected by the greedy algorithm until the error estimate falls below the specified exit tolerance.

\subsubsection{Silicon nitride membrane}

The last numerical example we present is the silicon nitride membrane benchmark of \cite{morwiki}. This is a parametric problem of dimension $\prmtrDim=4$ with state space dimension $\stateDim=60020$ and input and output dimensions of, respectively, $\inpDim=1$ and $\outDim=2$. All matrices are real and the parametric dependence is in the state matrices $\fE(\prmtr)$ and $\fA(\prmtr)$ given by
\begin{align*}
\fE(\prmtr)\;\vcentcolon=&\;\fE_1+\mu_3\mu_2\fE_2,\quad \fA(\prmtr)\;\vcentcolon=\;\fA_1+\mu_1\fA_2+\mu_4\fA_3,\quad\text{for}\\
\prmtr\vcentcolon=&\begin{bmatrix}
        \mu_1&\mu_2&\mu_3&\mu_4
    \end{bmatrix}^{\T}\in\prmtrSet\;\vcentcolon=\;[2,5]\times[400,750]\times[3000,3200]\times[10,12].
\end{align*}
In the original benchmark, the time window of interest is $[0,0.04]$ for the specific discontinuous input function $\inp(t)=H(0.02-t)$ where $H(t)$ is the Heaviside function. Here we consider the same input function but for the time window of interest $[T,\Lambda T]$ with $T=0.03$ and $\Lambda=2$. We define an accuracy threshold of $\tol=10^{-7}$ and design the integration contour $\Gamma$ with respect to the parameter values $\prmtr=[5\;\;750\;\;3200\;\;12]^{\T}$, which yields $N=59$ quadrature points. With this configuration of $\Gamma$, the initial hypothesis outlined at the start of \Cref{sec3} holds true, allowing for the application of the \CIM-\MOR. This is achieved by executing \Cref{alg:1} across a discretized domain $\Xi\subseteq\prmtrSet$, which consists of a grid with $7$ Chebyshev distributed points in each parameter direction for a total of $|\Xi|=7^4=2401$ points. We consider the initial data to belong to a subspace $\calF$ of dimension $\stateDimRed_{\calF}=1$ where the base of $\calF$ is generated by normalizing $1$ vector, where each entry is generated by a random normal distribution. Ultimately, we chose $\varepsilon=10^{-6}$ despite the fact that the \FOM precision was $\varepsilon=10^{-7}$. This decision was made because, for values of $\varepsilon$ lower than this, the resulting $\tilde \varepsilon_j$ for certain $j$ was only marginally greater than the machine precision threshold, leading to convergence issues.

\begin{figure}[t]
	\centering{
	\begin{subfigure}[t]{0.48\textwidth}
	    \input{img/SNM/Example_SNM_1}
        \subcaption{Reduction error $\calE_{\rr}(\prmtr,\inp,\inMat)$ (see \eqref{eqn:err:red:par:CIM}) over $\Xi$ with $|\Xi|=2401$, $\inp(t)=H(0.02-t)$, and $\inSolSub=1$.} \label{figSNMa}
	\end{subfigure}
    \hfill
    \begin{subfigure}[t]{0.48\textwidth}
	    \input{img/SNM/Example_SNM_2}
        \subcaption{Behaviour of the local error estimate $\tilde \Delta_j(\prmtr)$ with respect to the greedy algorithm iterations $J$ and target exit tollerance $\tilde \varepsilon_j$; see \eqref{eqn:local:err:est:tol}.} \label{figSNMb}
	\end{subfigure}
 }
	\caption{Silicon nitride membrane benchmark for $T=0.03$, $\Lambda=2$, and $\prmtrDim=4$.}
	\label{fig1:SNM}%
\end{figure}%

Similarly to the parametric thermal problem, we report, in \Cref{figSNMa}, the reduction error \eqref{eqn:err:red:par:CIM} computed over the discrete parameter domain $\Xi$ for $\inp(t)=H(0.02-t)$ and $\inSolSub=1$. Also in \Cref{figSNMb} the decay of the local error estimate $\tilde \Delta_j(\prmtr)$ and the target exit tolerance $\tilde \varepsilon_j$ defined in \eqref{eqn:local:err:est:tol} are displayed for some $j$. We note that the reduction error is significantly smaller than the desired accuracy of $\varepsilon=10^{-6}$ across all parameters within the domain. This discrepancy may imply that the error estimate considerably overestimates the error's magnitude. Nonetheless, it is critical to emphasize that the error estimate encompasses all conceivable initial solutions within the subspace $\calF$ and all permissible input functions based on the construction of $\Gamma$, see discussion in \Cref{sec:CIM:ad:inp}. It is important to point out that \Cref{figSNMa} illustrates the error for a specific pairing of inputs and outputs. Hence, it may be possible that the ratio between the error estimate and the actual reduction error is consistently smaller for other combinations.
\begin{table}[t]
\centering
\renewcommand{\arraystretch}{1.3} 
\begin{tabular}{@{} l c c c @{}}
\toprule
     & \textbf{Max} & \textbf{Mean} & \textbf{Min} \\ 
\midrule
\textbf{$\stateDimRed_j$} & 78 & 64  & 32  \\ 
\textbf{$J_{j,\max}$} & 39 & 32 &  16 \\ 
\bottomrule
\end{tabular}
\caption{Silicon nitride membrane benchmark for $T=0.03$, $\Lambda=2$, and $\prmtrDim=4$. Maximum, mean, and minimum value for: the size of the dense linear system solved to evaluate the \ROM \eqref{eqn:rstate:par:CIM} and the number of parameters selected $J_{j,\max}$ at iteration $j$ of \Cref{alg:1}.}
\label{Tab:6}
\end{table}
We conclude with \Cref{Tab:6}, which displays the maximum, average, and minimum dimensions of the \ROM as evaluated across the quadrature points $j=1,\ldots,N-1$. Additionally, it shows the maximum, average, and minimum number of parameters selected by the greedy algorithm. It is noteworthy that the average computation time for evaluating the \FOM, i.e., \eqref{eqn:out:par:CIM}, across the parameter domain $\Xi$ was $21$ seconds. In contrast, the evaluation of \eqref{eqn:out:par:CIM:red} using the \CIM-\MOR approach for the \ROM required only $8\cdot10^{-3}$ seconds on average, achieving a speed-up factor of approximately $2689$, while maintaining an accuracy level of at least $10^{-6}$ concerning the error measure \eqref{eqn:err:red:par:CIM}. 

\section{Conclusions}\label{sec5}
We presented a novel Petrov-Galerkin projection-based non-linear \MOR strategy for parametric linear control systems based on the use of contour integral methods. This methodology is shown to be able to approximate accurately and efficiently the output function for a wide range of input functions, initial conditions, and uniformly over a prescribed time-window for the parameter domain of interest. For each quadrature point used for the approximation of the contour integral, we construct the projection spaces via a weak-greedy strategy, i.e., an algorithm driven by a rigorous error estimator. We demonstrate that, using the proposed projection spaces, the Hermite interpolation conditions corresponding to the parameters chosen by the greedy algorithm are satisfied by the Laplace transform of the reduced output function at the quadrature points. Numerical experiments on a variety of benchmark problems demonstrate the effectiveness of our methodology. Comparison with state-of-the-art methodology for the non-parametric case is extensively discussed as well as conditions under which the proposed methodology may struggle. 
In summary, the proposed methodology offers a robust and scalable instrument for the fast evaluation of parametric input–output maps in parametric linear control systems with varying initial conditions. 

\subsection*{Acknowledgments}
Funded by Deutsche Forschungsgemeinschaft (DFG, German Research Foundation) under Germany's Excellence Strategy - EXC 2075 – 390740016. We acknowledge the support by the Stuttgart Center for Simulation Science (SimTech). This work is part of the Society for Industrial and Applied Mathematics (SIAM) Postdoctoral Support Program, which is funded by contributions to the SIAM Postdoctoral Support Fund, established by a gift from Drs. Martin Golubitsky and Barbara Keyfitz. MM also acknowledges funding from the BMBF (grant no.~05M22VSA) and is grateful to TT.-Prof. Dr. Benjamin Unger for his support in the SIAM Postdoctoral Support Program application. The work of SG is supported in part by the US National Science Foundation under grant DMS-2411141.

\bibliographystyle{plain-doi}
\bibliography{journalabbr,General_Literature}    

\end{document}

%% file: def.tex
\newcommand{\R}{\ensuremath\mathbb{R}}
\newcommand{\C}{\ensuremath\mathbb{C}}

\newcommand{\smoothFunctions}[3][]{\ifthenelse{\equal{#1}{}}{\mathcal{C}^{#2}}{\mathcal{C}_{#1}^{#2}}(#3)}

\newcommand{\dist}[1][]{\ifthenelse{\equal{#1}{}}{\mathbb{D}}{#1_{\mathbb{D}}}}

\newcommand{\prmtr}{\boldsymbol{\mu}}
\newcommand{\prmtrSet}{\calP}
\newcommand{\prmtrDim}{d}

\newcommand{\T}{\top} 

\newcommand{\integrate}[1]{\mathrm{d}#1}

\newcommand{\dz}{\integrate{z}}

\DeclareMathOperator{\ee}{e}

\DeclareMathOperator{\range}{img}

\DeclareMathOperator{\real}{Re}
\DeclareMathOperator{\imag}{Im}

\newcommand{\calE}{\mathcal{E}}
\newcommand{\calF}{\mathcal{F}}

\newcommand{\calH}{\mathcal{H}}

\newcommand{\calJ}{\mathcal{J}}

\newcommand{\calO}{\mathcal{O}}
\newcommand{\calP}{\mathcal{P}}

\newcommand{\calR}{\mathcal{R}}

\newcommand{\calU}{\mathcal{U}}

\newcommand{\stx}{\ensuremath{\bm{x}}}
\newcommand{\redstx}{\ensuremath{\bm{x}}_{\mathrm{r}}}

\newcommand{\stateDim}{n}

\newcommand{\stateDimRed}{r}
\newcommand{\inp}{\ensuremath{\bm{u}}}
\newcommand{\inpDim}{m}
\newcommand{\out}{\ensuremath{\bm{y}}}
\newcommand{\redout}{\ensuremath{\bm{y}}_{\mathrm{r}}}
\newcommand{\outDim}{p}

\newcommand{\ff}{\ensuremath{\bm{f}}}

\newcommand{\fn}{\ensuremath{\bm{n}}}

\newcommand{\fp}{\ensuremath{\bm{p}}}
\newcommand{\fq}{\ensuremath{\bm{q}}}
\newcommand{\fr}{\ensuremath{\bm{r}}}

\newcommand{\fv}{\ensuremath{\bm{v}}}
\newcommand{\fw}{\ensuremath{\bm{w}}}

\newcommand{\fA}{\ensuremath{\bm{A}}}
\newcommand{\fB}{\ensuremath{\bm{B}}}
\newcommand{\fC}{\ensuremath{\bm{C}}}
\newcommand{\fD}{\ensuremath{\bm{D}}}
\newcommand{\fE}{\ensuremath{\bm{E}}}
\newcommand{\fF}{\ensuremath{\bm{F}}}
\newcommand{\fG}{\ensuremath{\bm{G}}}

\newcommand{\fI}{\ensuremath{\bm{I}}}

\newcommand{\fK}{\ensuremath{\bm{K}}}

\newcommand{\fM}{\ensuremath{\bm{M}}}

\newcommand{\fS}{\ensuremath{\bm{S}}}

\newcommand{\fV}{\ensuremath{\bm{V}}}
\newcommand{\fW}{\ensuremath{\bm{W}}}
\newcommand{\fX}{\ensuremath{\bm{X}}}

\newcommand{\zeroVec}{\mathbf{0}}

\newcommand{\abbr}[1]{\textsf{#1}\xspace}
\newcommand{\FOM}{\abbr{FOM}}
\newcommand{\FOMs}{\abbr{FOMs}}
\newcommand{\ROM}{\abbr{ROM}}
\newcommand{\ROMs}{\abbr{ROMs}}
\newcommand{\MOR}{\abbr{MOR}}

\newcommand{\SCM}{\abbr{SCM}}

\newcommand{\PDEs}{\abbr{PDEs}}

\newcommand{\RBM}{\abbr{RBM}}

\newcommand{\CIM}{\abbr{CIM}}
\newcommand{\CIMs}{\abbr{CIMs}}

\newcommand{\LTI}{\abbr{LTI}}

\newcommand{\RB}{\abbr{RB}}
\newcommand{\POD}{\abbr{POD}}
\newcommand{\CS}{\abbr{CS}}
\newcommand{\IRKA}{\abbr{IRKA}}
\newcommand{\BT}{\abbr{BT}}
\newcommand{\TLBT}{\abbr{TLBT}}
\newcommand{\LCS}{\abbr{LCS}}

\newcommand{\expmv}{\abbr{expmv}}

\newcommand{\lapOp}{\gleMat{L}}
\newcommand{\lapVar}{z}
\newcommand{\gleMat}[1]{\mathbf{\mathscr{#1}}}
\newcommand{\imagunit}{{\bf i}}
\newcommand{\domega}{\integrate{\omega}}

\newcommand{\intvar}{s}
\newcommand{\dintvar}{\integrate{\intvar}}
\newcommand{\tol}{\abbr{tol}}
\newcommand{\source}{\ff}
\newcommand{\intVarTwo}{g}
\newcommand{\eps}{\rm eps}

\usepackage{mathtools}
\usepackage{hyperref}
\usepackage{cleveref}

\newcommand{\rand}{\texttt{rand}}
\newcommand{\randn}{\texttt{randn}}
\newcommand{\orth}{\texttt{orth}}
\newcommand{\matlab}{{\sc Matlab~}}

\newcommand{\res}{{\fr}}
\newcommand{\rr}{\mathrm{r}}
\newcommand{\inMat}{\fF}
\newcommand{\inSolSub}{\tilde \stx_0}

\newcommand{\redfA}{\ensuremath{\bm{A}}_{\mathrm{r}}}
\newcommand{\redfB}{\ensuremath{\bm{B}}_{\mathrm{r}}}
\newcommand{\redfC}{\ensuremath{\bm{C}}_{\mathrm{r}}}

\newcommand{\redfE}{\ensuremath{\bm{E}}_{\mathrm{r}}}

%% file: img/Fig1.tex
\begin{tikzpicture}
\begin{axis}[
    width=\imageWidth,
	height=\imageHeight,
	scale only axis,
    axis equal,
    axis lines=middle,
    xmin=-0.9, xmax=0.2,
    ymin=-1.5, ymax=1.5,
    xlabel={$\real(\lapVar)$}, ylabel={$\imagunit\imag(\lapVar)$},
    ticks=none
]

\def\a{1}    
\def\b{0.5}  

\addplot [color=mycolor1, line width=\lineWidth,
    domain=-90:90,
    samples=300,
] ({\a*cos(x)-0.7}, {\b*sin(x)});

\def\xend{-0.7}
\def\yend{-0.5}


\def\L{1.5}  
\def\angle{-120}

\addplot[-, color=mycolor1, line width=\lineWidth] coordinates {
    (\xend,\yend)
    ({\xend + \L*sin(\angle)}, {\yend + \L*cos(\angle)})
};

\def\xend{-0.7}
\def\yend{0.5}

\def\L{1.5}  
\def\angle{-60}

\addplot[-, color=mycolor1, line width=\lineWidth] coordinates {
    (\xend,\yend)
    ({\xend + \L*sin(\angle)}, {\yend + \L*cos(\angle)})
};

\end{axis}
\end{tikzpicture}

%% file: img/Fig2.tex
%
\begin{tikzpicture}

\begin{axis}[%
width=\imageWidth,
	height=\imageHeight,
	scale only axis,
	scaled ticks=false,
	grid=both,
	grid style={line width=.1pt, draw=gray!10},
	major grid style={line width=.2pt,draw=gray!50},
	axis lines*=left,
	axis line style={line width=\lineWidth},
xmin=-4,
xmax=3,
xlabel style={font=\color{white!15!black}},
xlabel={$\real(\lapVar)$},
ymin=-20,
ymax=20,
ylabel style={font=\color{white!15!black}},
ylabel={$ \imagunit \imag(\lapVar)$},
	axis background/.style={fill=white},
	legend style={%
		legend cell align=left, 
		align=left, 
		font=\tiny,
		draw=white!15!black,
		at={(1.00,1.00)},
		anchor=north east,},
]
\addplot [color=mycolor1,line width=\lineWidth]
  table [x index=0, y index=1, col sep=comma]{img/DataCSV/BG/Map.csv};

\addlegendentry{$\Gamma$ }

\addplot [color=mycolor3,dash dot,line width=1.2*\lineWidth]
  table [x index=2, y index=3, col sep=comma]{img/DataCSV/BG/Map.csv};

\addlegendentry{$\Gamma_{right}$ }

\addplot [color=mycolor2,dashed,line width=1.2*\lineWidth]
  table [x index=4, y index=5, col sep=comma]{img/DataCSV/BG/Map.csv};

\addlegendentry{$\Gamma_{left}$ }

\end{axis}
\end{tikzpicture}%

%% file: img/ISS/Example_ISS_1.tex
%
\begin{tikzpicture}

\begin{axis}[%
	width=0.64*\imageWidth,
	height=\imageHeight,
	scale only axis,
	grid=both,
	grid style={line width=.1pt, draw=gray!10},
	major grid style={line width=.2pt,draw=gray!50},
	axis lines*=left,
	axis line style={line width=\lineWidth},
    scaled x ticks = true,        
xmin=1,
xmax=1.5e3,
xlabel style={font=\color{white!15!black}},
xlabel={N},
 xticklabel style={
    /pgf/number format/sci,
    /pgf/number format/precision=2
  },
xtick={5e2,1e3},  
ymode=log,
ymin=5e-9,
ymax=1e5,
yminorticks=true,
ylabel style={font=\color{white!15!black}},
	axis background/.style={fill=white},
	legend style={%
		legend cell align=left, 
		align=left, 
		font=\tiny,
		draw=white!15!black,
		at={(1.0,1.0)},
		anchor=north east,},
]

\addplot [color=mycolor1, line width=\lineWidth, forget plot]
 table [x index=0, y index=1, col sep=comma]{img/DataCSV/ISS/Input1/1.000000e-04.csv};

\addplot [color=mycolor1, dotted, line width=\lineWidth]
  table[row sep=crcr]{%
1	 1e-4\\
3.8e3 1e-4\\
};
\addlegendentry{$\tol=10^{-4}$}


\addplot [color=mycolor3, line width=\lineWidth, forget plot]
 table [x index=0, y index=1, col sep=comma]{img/DataCSV/ISS/Input1/1.000000e-03.csv};
 
\addplot [color=mycolor3, densely dotted, line width=\lineWidth]
  table[row sep=crcr]{%
1	1e-03\\
3.8e3	1e-03\\
};
\addlegendentry{$\tol=10^{-3}$}


\addplot [color=mycolor4, line width=\lineWidth, forget plot]
 table [x index=0, y index=1, col sep=comma]{img/DataCSV/ISS/Input1/1.000000e-02.csv};

\addplot [color=mycolor4, loosely dashdotted, line width=\lineWidth]
  table[row sep=crcr]{%
1	1e-02\\
3.8e3	1e-02\\
};
\addlegendentry{$\tol=10^{-2}$}


\addplot [color=mycolor2, line width=\lineWidth, forget plot]
 table [x index=0, y index=1, col sep=comma]{img/DataCSV/ISS/Input1/1.000000e-01.csv};
 
\addplot [color=mycolor2, densely dashdotted, line width=\lineWidth]
  table[row sep=crcr]{%
1	1e-1\\
3.8e3	1e-1\\
};
\addlegendentry{$\tol=10^{-1}$}

\addplot [color=mycolor5, line width=\lineWidth, only marks, mark=o, mark options={solid, mycolor5}]
table [x index=0, y index=1, col sep=comma]{img/DataCSV/ISS/Input1/OptN.csv};
\addlegendentry{$\tilde N$ (see \eqref{eqn:opt:num:QP})}
\end{axis}
\end{tikzpicture}%

%% file: img/ISS/Example_ISS_2.tex
%
\begin{tikzpicture}

\begin{axis}[%
	width=0.64*\imageWidth,
	height=\imageHeight,
	scale only axis,
	grid=both,
	grid style={line width=.1pt, draw=gray!10},
	major grid style={line width=.2pt,draw=gray!50},
	axis lines*=left,
	axis line style={line width=\lineWidth},
    scaled x ticks = true,        
xmin=1,
xmax=1.5e3,
xlabel style={font=\color{white!15!black}},
xlabel={N},
 xticklabel style={
    /pgf/number format/sci,
    /pgf/number format/precision=2
  },
xtick={5e2,1e3},  
ymode=log,
ymin=5e-9,
ymax=1e5,
yminorticks=true,
ylabel style={font=\color{white!15!black}},
	axis background/.style={fill=white},
	legend style={%
		legend cell align=left, 
		align=left, 
		font=\tiny,
		draw=white!15!black,
		at={(1.0,1.0)},
		anchor=north east,},
]

\addplot [color=mycolor1, line width=\lineWidth, forget plot]
 table [x index=0, y index=1, col sep=comma]{img/DataCSV/ISS/Input2/1.000000e-04.csv};

\addplot [color=mycolor1, dotted, line width=\lineWidth]
  table[row sep=crcr]{%
1	 1e-4\\
3.8e3 1e-4\\
};


\addplot [color=mycolor3, line width=\lineWidth, forget plot]
 table [x index=0, y index=1, col sep=comma]{img/DataCSV/ISS/Input2/1.000000e-03.csv};
 
\addplot [color=mycolor3, densely dotted, line width=\lineWidth]
  table[row sep=crcr]{%
1	1e-03\\
3.8e3	1e-03\\
};


\addplot [color=mycolor4, line width=\lineWidth, forget plot]
 table [x index=0, y index=1, col sep=comma]{img/DataCSV/ISS/Input2/1.000000e-02.csv};

\addplot [color=mycolor4, loosely dashdotted, line width=\lineWidth]
  table[row sep=crcr]{%
1	1e-2\\
3.8e3	1e-2\\
};


\addplot [color=mycolor2, line width=\lineWidth, forget plot]
 table [x index=0, y index=1, col sep=comma]{img/DataCSV/ISS/Input2/1.000000e-01.csv};
 
\addplot [color=mycolor2, densely dashdotted, line width=\lineWidth]
  table[row sep=crcr]{%
1	1e-1\\
3.8e3	1e-1\\
};

\addplot [color=mycolor5, line width=\lineWidth, only marks, mark=o, mark options={solid, mycolor5}]
table [x index=0, y index=1, col sep=comma]{img/DataCSV/ISS/Input2/OptN.csv};
\end{axis}
\end{tikzpicture}%

%% file: img/ISS/Example_ISS_3.tex
%
\begin{tikzpicture}

\begin{axis}[%
width=0.64*\imageWidth,
	height=\imageHeight,
	scale only axis,
	scaled ticks=false,
	grid=both,
	grid style={line width=.1pt, draw=gray!10},
	major grid style={line width=.2pt,draw=gray!50},
	axis lines*=left,
	axis line style={line width=\lineWidth},
xmin=-1,
xmax=0.35,
xlabel style={font=\color{white!15!black}},
xlabel={$\real(\lapVar)$},
ymin=-60,
ymax=60,
ylabel style={font=\color{white!15!black}},
ylabel={$ \imagunit \imag(\lapVar)$},
	axis background/.style={fill=white},
	legend style={%
		legend cell align=left, 
		align=left, 
		font=\tiny,
		draw=white!15!black,
		at={(0.60,0.65)},
		anchor=north east,},
]

\addplot  [color=mycolor5,line width=\lineWidth, only marks, mark=x, mark options={solid, mycolor5}]
 table [x index=0, y index=1, col sep=comma]{img/DataCSV/ISS/SpectrumAE.csv};
\addlegendentry{$\Sigma(\fA)$}

\addplot [color=mycolor1,line width=\lineWidth]
  table [x index=0, y index=1, col sep=comma]{img/DataCSV/ISS/Gamma.csv};

\addlegendentry{$\Gamma$}

\addplot [color=mycolor2,dash dot,line width=1.2*\lineWidth]
  table [x index=2, y index=3, col sep=comma]{img/DataCSV/ISS/Gamma.csv};

\addlegendentry{$\Gamma_{left}$ }

\addplot [color=mycolor3,dashed,line width=1.2*\lineWidth]
  table [x index=4, y index=5, col sep=comma]{img/DataCSV/ISS/Gamma.csv};

\addlegendentry{$\Gamma_{right}$ }

\end{axis}
\end{tikzpicture}%

%% file: img/TB/Example_TB_1.tex
%
\begin{tikzpicture}

\begin{axis}[%
	width=0.64*\imageWidth,
	height=\imageHeight,
	scale only axis,
	grid=both,
	grid style={line width=.1pt, draw=gray!10},
	major grid style={line width=.2pt,draw=gray!50},
	axis lines*=left,
	axis line style={line width=\lineWidth},
    scaled x ticks = true,        
xmin=1,
xmax=80,
xlabel style={font=\color{white!15!black}},
xlabel={N},  
ymode=log,
ymin=1e-10,
ymax=1e6,
yminorticks=true,
ylabel style={font=\color{white!15!black}},
	axis background/.style={fill=white},
	legend style={%
		legend cell align=left, 
		align=left, 
		font=\tiny,
		draw=white!15!black,
		at={(1.0,1.0)},
		anchor=north east,},
]

\addplot [color=mycolor1, line width=\lineWidth, forget plot]
 table [x index=0, y index=1, col sep=comma]{img/DataCSV/TB/Input1/1.000000e-08.csv};
 
\addplot [color=mycolor1, dotted, line width=\lineWidth]
  table[row sep=crcr]{%
1	1e-08\\
2200	1e-08\\
};
\addlegendentry{$\tol=10^{-8}$}


\addplot [color=mycolor3, line width=\lineWidth, forget plot]
 table [x index=0, y index=1, col sep=comma]{img/DataCSV/TB/Input1/1.000000e-06.csv};
 
\addplot [color=mycolor3, densely dotted, line width=\lineWidth]
  table[row sep=crcr]{%
1	1e-06\\
2200	1e-06\\
};
\addlegendentry{$\tol=10^{-6}$}


\addplot [color=mycolor4, line width=\lineWidth, forget plot]
 table [x index=0, y index=1, col sep=comma]{img/DataCSV/TB/Input1/1.000000e-04.csv};

\addplot [color=mycolor4, loosely dashdotted, line width=\lineWidth]
  table[row sep=crcr]{%
1	0.0001\\
2200	0.0001\\
};
\addlegendentry{$\tol=10^{-4}$}


\addplot [color=mycolor2, line width=\lineWidth, forget plot]
 table [x index=0, y index=1, col sep=comma]{img/DataCSV/TB/Input1/1.000000e-02.csv};
 
\addplot [color=mycolor2, densely dashdotted, line width=\lineWidth]
  table[row sep=crcr]{%
1	0.01\\
2200	0.01\\
};
\addlegendentry{$\tol=10^{-2}$}

\addplot [color=mycolor5, line width=\lineWidth, only marks, mark=o, mark options={solid, mycolor5}]
table [x index=0, y index=1, col sep=comma]{img/DataCSV/TB/Input1/OptN.csv};
\addlegendentry{$\tilde N$ (see \eqref{eqn:opt:num:QP})}

\end{axis}
\end{tikzpicture}%

%% file: img/TB/Example_TB_2.tex
%
\begin{tikzpicture}

\begin{axis}[%
	width=0.64*\imageWidth,
	height=\imageHeight,
	scale only axis,
	grid=both,
	grid style={line width=.1pt, draw=gray!10},
	major grid style={line width=.2pt,draw=gray!50},
	axis lines*=left,
	axis line style={line width=\lineWidth},
    scaled x ticks = true,        
xmin=1,
xmax=80,
xlabel style={font=\color{white!15!black}},
xlabel={N},
ymode=log,
ymin=1e-14,
ymax=1e6,
yminorticks=true,
ylabel style={font=\color{white!15!black}},
	axis background/.style={fill=white},
	legend style={%
		legend cell align=left, 
		align=left, 
		font=\tiny,
		draw=white!15!black,
		at={(1.0,1.0)},
		anchor=north east,},
]

\addplot [color=mycolor1, line width=\lineWidth, forget plot]
 table [x index=0, y index=1, col sep=comma]{img/DataCSV/TB/Input2/1.000000e-08.csv};

\addplot [color=mycolor1, dotted, line width=\lineWidth]
  table[row sep=crcr]{%
1	1e-8\\
2200	1e-8\\
};


\addplot [color=mycolor3, line width=\lineWidth, forget plot]
 table [x index=0, y index=1, col sep=comma]{img/DataCSV/TB/Input2/1.000000e-06.csv};
 
\addplot [color=mycolor3, densely dotted, line width=\lineWidth]
  table[row sep=crcr]{%
1	1e-06\\
2200	1e-06\\
};


\addplot [color=mycolor4, line width=\lineWidth, forget plot]
 table [x index=0, y index=1, col sep=comma]{img/DataCSV/TB/Input2/1.000000e-04.csv};

\addplot [color=mycolor4, loosely dashdotted, line width=\lineWidth]
  table[row sep=crcr]{%
1	0.0001\\
2200	0.0001\\
};


\addplot [color=mycolor2, line width=\lineWidth, forget plot]
 table [x index=0, y index=1, col sep=comma]{img/DataCSV/TB/Input2/1.000000e-02.csv};
 
\addplot [color=mycolor2, densely dashdotted, line width=\lineWidth]
  table[row sep=crcr]{%
1	0.01\\
2200	0.01\\
};

\addplot [color=mycolor5, line width=\lineWidth, only marks, mark=o, mark options={solid, mycolor5}]
table [x index=0, y index=1, col sep=comma]{img/DataCSV/TB/Input2/OptN.csv};
\end{axis}
\end{tikzpicture}%

%% file: img/TB/Example_TB_3.tex
%
\begin{tikzpicture}

\begin{axis}[%
width=0.64*\imageWidth,
	height=\imageHeight,
	scale only axis,
	scaled ticks=false,
	grid=both,
	grid style={line width=.1pt, draw=gray!10},
	major grid style={line width=.2pt,draw=gray!50},
	axis lines*=left,
	axis line style={line width=\lineWidth},
xmin=-1.5,
xmax=0.25,
xlabel style={font=\color{white!15!black}},
xlabel={$\real(\lapVar)$},
ymin=-1.2,
ymax=1.2,
ylabel style={font=\color{white!15!black}},
ylabel={$ \imagunit \imag(\lapVar)$},
	axis background/.style={fill=white},
	legend style={%
		legend cell align=left, 
		align=left, 
		font=\tiny,
		draw=white!15!black,
		at={(0.62,0.90)},
		anchor=north east,},
]

\addplot  [color=mycolor5,line width=\lineWidth, only marks, mark=x, mark options={solid, mycolor5}]
 table [x index=0, y index=1, col sep=comma]{img/DataCSV/TB/SpectrumAE.csv};
\addlegendentry{$\Sigma(\fA,\fE)$}

\addplot [color=mycolor1,line width=\lineWidth]
  table [x index=0, y index=1, col sep=comma]{img/DataCSV/TB/Gamma.csv};

\addlegendentry{$\Gamma$}

\addplot [color=mycolor2,dash dot,line width=1.2*\lineWidth]
  table [x index=2, y index=3, col sep=comma]{img/DataCSV/TB/Gamma.csv};

\addlegendentry{$\Gamma_{left}$ }

\addplot [color=mycolor3,dashed,line width=1.2*\lineWidth]
  table [x index=4, y index=5, col sep=comma]{img/DataCSV/TB/Gamma.csv};

  \addlegendentry{$\Gamma_{right}$ }

\end{axis}
\end{tikzpicture}%

%% file: img/BG/Example_BG_1.tex
%
\begin{tikzpicture}

\begin{axis}[%
	width=0.64*\imageWidth,
	height=\imageHeight,
	scale only axis,
	grid=both,
	grid style={line width=.1pt, draw=gray!10},
	major grid style={line width=.2pt,draw=gray!50},
	axis lines*=left,
	axis line style={line width=\lineWidth},
    scaled x ticks = true,        
xmin=1,
xmax=60,
xlabel style={font=\color{white!15!black}},
xlabel={N}, 
ymode=log,
ymin=1e-10,
ymax=1e6,
yminorticks=true,
ylabel style={font=\color{white!15!black}},
	axis background/.style={fill=white},
	legend style={%
		legend cell align=left, 
		align=left, 
		font=\tiny,
		draw=white!15!black,
		at={(1.0,1.0)},
		anchor=north east,},
]

\addplot [color=mycolor1, line width=\lineWidth, forget plot]
 table [x index=0, y index=1, col sep=comma]{img/DataCSV/BG/Input1/1.000000e-08.csv};

\addplot [color=mycolor1, dotted, line width=\lineWidth]
  table[row sep=crcr]{%
1	 1e-8\\
2200 1e-8\\
};
\addlegendentry{$\tol=10^{-8}$}


\addplot [color=mycolor3, line width=\lineWidth, forget plot]
 table [x index=0, y index=1, col sep=comma]{img/DataCSV/BG/Input1/1.000000e-06.csv};
 
\addplot [color=mycolor3, densely dotted, line width=\lineWidth]
  table[row sep=crcr]{%
1	1e-06\\
2200	1e-06\\
};
\addlegendentry{$\tol=10^{-6}$}


\addplot [color=mycolor4, line width=\lineWidth, forget plot]
 table [x index=0, y index=1, col sep=comma]{img/DataCSV/BG/Input1/1.000000e-04.csv};

\addplot [color=mycolor4, loosely dashdotted, line width=\lineWidth]
  table[row sep=crcr]{%
1	0.0001\\
2200	0.0001\\
};
\addlegendentry{$\tol=10^{-4}$}


\addplot [color=mycolor2, line width=\lineWidth, forget plot]
 table [x index=0, y index=1, col sep=comma]{img/DataCSV/BG/Input1/1.000000e-02.csv};
 
\addplot [color=mycolor2, densely dashdotted, line width=\lineWidth]
  table[row sep=crcr]{%
1	0.01\\
2200	0.01\\
};
\addlegendentry{$\tol=10^{-2}$}

\addplot [color=mycolor5, line width=\lineWidth, only marks, mark=o, mark options={solid, mycolor5}]
table [x index=0, y index=1, col sep=comma]{img/DataCSV/BG/Input1/OptN.csv};
\addlegendentry{$\tilde N$ (see \eqref{eqn:opt:num:QP})}
\end{axis}
\end{tikzpicture}%

%% file: img/BG/Example_BG_2.tex
%
\begin{tikzpicture}

\begin{axis}[%
	width=0.64*\imageWidth,
	height=\imageHeight,
	scale only axis,
	grid=both,
	grid style={line width=.1pt, draw=gray!10},
	major grid style={line width=.2pt,draw=gray!50},
	axis lines*=left,
	axis line style={line width=\lineWidth},
    scaled x ticks = true,        
xmin=1,
xmax=60,
xlabel style={font=\color{white!15!black}},
xlabel={N},
ymode=log,
ymin=1e-10,
ymax=1e5,
yminorticks=true,
ylabel style={font=\color{white!15!black}},
	axis background/.style={fill=white},
	legend style={%
		legend cell align=left, 
		align=left, 
		font=\tiny,
		draw=white!15!black,
		at={(1.0,1.0)},
		anchor=north east,},
]

\addplot [color=mycolor1, line width=\lineWidth, forget plot]
 table [x index=0, y index=1, col sep=comma]{img/DataCSV/BG/Input2/1.000000e-08.csv};

\addplot [color=mycolor1, dotted, line width=\lineWidth]
  table[row sep=crcr]{%
1	 1e-8\\
2200 1e-8\\
};


\addplot [color=mycolor3, line width=\lineWidth, forget plot]
 table [x index=0, y index=1, col sep=comma]{img/DataCSV/BG/Input2/1.000000e-06.csv};
 
\addplot [color=mycolor3, densely dotted, line width=\lineWidth]
  table[row sep=crcr]{%
1	1e-06\\
2200	1e-06\\
};


\addplot [color=mycolor4, line width=\lineWidth, forget plot]
 table [x index=0, y index=1, col sep=comma]{img/DataCSV/BG/Input2/1.000000e-04.csv};

\addplot [color=mycolor4, loosely dashdotted, line width=\lineWidth]
  table[row sep=crcr]{%
1	0.0001\\
2200	0.0001\\
};


\addplot [color=mycolor2, line width=\lineWidth, forget plot]
 table [x index=0, y index=1, col sep=comma]{img/DataCSV/BG/Input2/1.000000e-02.csv};
 
\addplot [color=mycolor2, densely dashdotted, line width=\lineWidth]
  table[row sep=crcr]{%
1	0.01\\
2200	0.01\\
};

\addplot [color=mycolor5, line width=\lineWidth, only marks, mark=o, mark options={solid, mycolor5}]
table [x index=0, y index=1, col sep=comma]{img/DataCSV/BG/Input2/OptN.csv};
\end{axis}
\end{tikzpicture}%

%% file: img/BG/Example_BG_3.tex
%
\begin{tikzpicture}

\begin{axis}[%
width=0.64*\imageWidth,
	height=\imageHeight,
	scale only axis,
	scaled ticks=false,
	grid=both,
	grid style={line width=.1pt, draw=gray!10},
	major grid style={line width=.2pt,draw=gray!50},
	axis lines*=left,
	axis line style={line width=\lineWidth},
xmin=-3,
xmax=4,
xlabel style={font=\color{white!15!black}},
xlabel={$\real(\lapVar)$},
ymin=-10,
ymax=10,
ylabel style={font=\color{white!15!black}},
ylabel={$ \imagunit \imag(\lapVar)$},
	axis background/.style={fill=white},
	legend style={%
		legend cell align=left, 
		align=left, 
		font=\tiny,
		draw=white!15!black,
		at={(1.00,1.00)},
		anchor=north east,},
]

\addplot  [color=mycolor5,line width=\lineWidth, only marks, mark=x, mark options={solid, mycolor5}]
 table [x index=0, y index=1, col sep=comma]{img/DataCSV/BG/SpectrumAE.csv};
\addlegendentry{$\Sigma(\fA)$}

\addplot [color=mycolor1,line width=\lineWidth]
  table [x index=0, y index=1, col sep=comma]{img/DataCSV/BG/Gamma.csv};

 \addlegendentry{$\Gamma$ }

\addplot [color=mycolor2,dash dot,line width=1.2*\lineWidth]
  table [x index=2, y index=3, col sep=comma]{img/DataCSV/BG/Gamma.csv};

\addlegendentry{$\Gamma_{left}$ }

\addplot [color=mycolor3,dashed,line width=1.2*\lineWidth]
  table [x index=4, y index=5, col sep=comma]{img/DataCSV/BG/Gamma.csv};

  \addlegendentry{$\Gamma_{right}$ }

\end{axis}
\end{tikzpicture}%

%% file: img/ParTB/Example_ParTB_1.tex
%
\begin{tikzpicture}

\begin{axis}[%
width=\imageWidth,
	height=\imageHeight,
	scale only axis,
	grid=both,
	grid style={line width=.1pt, draw=gray!10},
	major grid style={line width=.2pt,draw=gray!50},
	axis lines*=left,
	axis line style={line width=\lineWidth},
    scaled x ticks = true,        
xmode=log,
xmin=1e-06,
xmax=100,
xlabel style={font=\color{white!15!black}},
xlabel={$\mu$}, 
ymode=log,
ymin=1e-15,
ymax=1e-11,
yminorticks=true,
ylabel style={font=\color{white!15!black}},
	axis background/.style={fill=white},
	legend style={%
		legend cell align=left, 
		align=left, 
		font=\tiny,
		draw=white!15!black,
		at={(0.02,1.0)},
		anchor=north west,},
]
\addplot [color=mycolor1, line width=\lineWidth]
 table [x index=0, y index=1, col sep=comma]{img/DataCSV/ParTB/Input1/Input1.csv};
 \addlegendentry{$ \calE_{\rr}(\prmtr,\inp,\fn)$ for \eqref{eqn:ParTB:inp1}}
 \addplot [color=mycolor2, dashed, line width=\lineWidth]
 table [x index=0, y index=1, col sep=comma]{img/DataCSV/ParTB/Input2/Input2.csv};
 \addlegendentry{$ \calE_{\rr}(\prmtr,\inp,\fn)$ for \eqref{eqn:ParTB:inp2}}
\addplot [color=mycolor3, dash dot, line width=\lineWidth]
 table [x index=0, y index=1, col sep=comma]{img/DataCSV/ParTB/Input3/Input3.csv};
 \addlegendentry{$ \calE_{\rr}(\prmtr,\inp,\fn)$ for \eqref{eqn:ParTB:inp3}}
\end{axis}

\end{tikzpicture}%

%% file: img/ParTB/Example_ParTB_2.tex
%
\begin{tikzpicture}

\begin{axis}[%
	width=\imageWidth,
	height=\imageHeight,
	scale only axis,
	grid=both,
	grid style={line width=.1pt, draw=gray!10},
	major grid style={line width=.2pt,draw=gray!50},
	axis lines*=left,
	axis line style={line width=\lineWidth},
    scaled x ticks = true,        
xmin=3,
xmax=30,
xlabel style={font=\color{white!15!black}},
xlabel={$J$}, 
ymode=log,
ymin=1e-13,
ymax=1e11,
yminorticks=true,
ylabel style={font=\color{white!15!black}},
	axis background/.style={fill=white},
	legend style={%
		legend cell align=left, 
		align=left, 
		font=\tiny,
		draw=white!15!black,
		at={(1.0,1.0)},
		anchor=north east,},
]

\addplot [color=mycolor1, line width=\lineWidth, forget plot]
 table [x index=0, y index=1, col sep=comma]{img/DataCSV/ParTB/Delta5.csv};

 \addplot [color=mycolor1, line width=\lineWidth]
 table [x index=0, y index=1, col sep=comma]{img/DataCSV/ParTB/TildeEpsilon.csv};
\addlegendentry{$\tilde \Delta_{56}(\mu_J)$ and $ \tilde \varepsilon_{56}$}


\addplot [color=mycolor2, line width=\lineWidth, dashed, forget plot]
 table [x index=0, y index=1, col sep=comma]{img/DataCSV/ParTB/Delta4.csv};

\addplot [color=mycolor2, dashed, line width=\lineWidth]
 table [x index=0, y index=2, col sep=comma]{img/DataCSV/ParTB/TildeEpsilon.csv};
\addlegendentry{$\tilde \Delta_{49}(\mu_J)$ and $ \tilde \varepsilon_{49}$}


\addplot [color=mycolor3, dashdotted, line width=\lineWidth, forget plot]
 table [x index=0, y index=1, col sep=comma]{img/DataCSV/ParTB/Delta3.csv};

\addplot [color=mycolor3, dashdotted, line width=\lineWidth]
 table [x index=0, y index=3, col sep=comma]{img/DataCSV/ParTB/TildeEpsilon.csv};
\addlegendentry{$\tilde \Delta_{42}(\mu_J)$ and $ \tilde \varepsilon_{42}$}


\addplot [color=mycolor4, dash dot dot, line width=1.1*\lineWidth, forget plot]
 table [x index=0, y index=1, col sep=comma]{img/DataCSV/ParTB/Delta2.csv};

\addplot [color=mycolor4, dash dot dot, line width=1.1*\lineWidth]
 table [x index=0, y index=4, col sep=comma]{img/DataCSV/ParTB/TildeEpsilon.csv};
\addlegendentry{$\tilde \Delta_{36}(\mu_J)$ and $ \tilde \varepsilon_{36}$}
 
\addplot [color=mycolor5, dotted, line width=1.2*\lineWidth, forget plot]
 table [x index=0, y index=1, col sep=comma]{img/DataCSV/ParTB/Delta1.csv};
 
\addplot [color=mycolor5, dotted, line width=1.2*\lineWidth]
 table [x index=0, y index=5, col sep=comma]{img/DataCSV/ParTB/TildeEpsilon.csv};
 
\addlegendentry{$\tilde \Delta_{29}(\mu_J)$ and $ \tilde \varepsilon_{29}$}
\end{axis}
\end{tikzpicture}%

%% file: img/SNM/Example_SNM_1.tex
%
\begin{tikzpicture}

\begin{axis}[%
width=\imageWidth,
	height=\imageHeight,
	scale only axis,
	grid=both,
	grid style={line width=.1pt, draw=gray!10},
	major grid style={line width=.2pt,draw=gray!50},
	axis lines*=left,
	axis line style={line width=\lineWidth},
    scaled x ticks = true,        
xmin=1,
xmax=2401,
xlabel style={font=\color{white!15!black}},
ymode=log,
ymin=3e-15,
ymax=7e-14,
yminorticks=true,
ylabel style={font=\color{white!15!black}},
	axis background/.style={fill=white},
	legend style={%
		legend cell align=left, 
		align=left, 
		font=\tiny,
		draw=white!15!black,
		at={(0.02,0.9)},
		anchor=north west,},
]
\addplot [color=mycolor1, dashed, line width=\lineWidth]
 table [x index=0, y index=1, col sep=comma]{img/DataCSV/SNM/Input.csv};
\end{axis}

\end{tikzpicture}%

%% file: img/SNM/Example_SNM_2.tex
%
\begin{tikzpicture}

\begin{axis}[%
	width=\imageWidth,
	height=\imageHeight,
	scale only axis,
	grid=both,
	grid style={line width=.1pt, draw=gray!10},
	major grid style={line width=.2pt,draw=gray!50},
	axis lines*=left,
	axis line style={line width=\lineWidth},
    scaled x ticks = true,        
xmin=1,
xmax=38,
xlabel style={font=\color{white!15!black}},
xlabel={$J$}, 
ymode=log,
ymin=5e-11,
ymax=1e12,
yminorticks=true,
ylabel style={font=\color{white!15!black}},
	axis background/.style={fill=white},
	legend style={%
		legend cell align=left, 
		align=left, 
		font=\tiny,
		draw=white!15!black,
		at={(1.0,1.0)},
		anchor=north east,},
]

\addplot [color=mycolor1, line width=\lineWidth, forget plot]
 table [x index=0, y index=1, col sep=comma]{img/DataCSV/SNM/Delta5.csv};

 \addplot [color=mycolor1, line width=\lineWidth]
  table[row sep=crcr]{%
1	    34.7113\\
2200	34.7113\\
};
\addlegendentry{$\tilde \Delta_{58}(\mu_J)$ and $ \tilde \varepsilon_{58}$}


\addplot [color=mycolor2, line width=\lineWidth, dashed, forget plot]
 table [x index=0, y index=1, col sep=comma]{img/DataCSV/ParTB/Delta4.csv};

\addplot [color=mycolor2, dashed, line width=\lineWidth]
  table[row sep=crcr]{%
1	    0.0020\\
2200	0.0020\\
};
\addlegendentry{$\tilde \Delta_{51}(\mu_J)$ and $ \tilde \varepsilon_{51}$}


\addplot [color=mycolor3, dashdotted, line width=\lineWidth, forget plot]
 table [x index=0, y index=1, col sep=comma]{img/DataCSV/SNM/Delta3.csv};

\addplot [color=mycolor3, dashdotted, line width=\lineWidth]
  table[row sep=crcr]{%
1	9.9209e-07\\
2200	9.9209e-07\\
};
\addlegendentry{$\tilde \Delta_{44}(\mu_J)$ and $ \tilde \varepsilon_{44}$}


\addplot [color=mycolor4, dash dot dot, line width=1.1*\lineWidth, forget plot]
 table [x index=0, y index=1, col sep=comma]{img/DataCSV/SNM/Delta2.csv};

\addplot [color=mycolor4, dash dot dot, line width=1.1*\lineWidth]
  table[row sep=crcr]{%
1	9.1946e-09\\
2200	9.1946e-09\\
};
\addlegendentry{$\tilde \Delta_{37}(\mu_J)$ and $ \tilde \varepsilon_{37}$}

\addplot [color=mycolor5, dotted, line width=1.2*\lineWidth, forget plot]
 table [x index=0, y index=1, col sep=comma]{img/DataCSV/SNM/Delta1.csv};
 
\addplot [color=mycolor5, dotted, line width=1.2*\lineWidth]
  table[row sep=crcr]{%
1	 5.6141e-10\\
2200 5.6141e-10\\
};
\addlegendentry{$\tilde \Delta_{30}(\mu_J)$ and $ \tilde \varepsilon_{30}$}
\end{axis}
\end{tikzpicture}%